\theoremstyle{plain}
\newtheorem{theorem}{Theorem}[section]
\newtheorem{lemma}[theorem]{Lemma}
\newtheorem{proposition}[theorem]{Proposition}
\newtheorem{corollary}[theorem]{Corollary}
\theoremstyle{definition}
\newtheorem{remark}[theorem]{Remark}
\newtheorem{example}[theorem]{Example}
\numberwithin{equation}{section}
\newcommand\bG{{\mathbb G}}
\newcommand\bZ{{\mathbb Z}}
\newcommand\bmu{{\mathbb\mu}}
\newcommand\cF{{\mathcal F}}
\newcommand\cG{{\mathcal G}}
\newcommand\cL{{\mathcal L}}
\newcommand\cO{{\mathcal O}}
\newcommand\cP{{\mathcal P}}
\newcommand\wA{\widehat{A}}
\newcommand\wT{\widehat{T}}
\newcommand\aff{\operatorname{aff}}
\newcommand\ant{\operatorname{ant}}
\newcommand\charc{\operatorname{char}}
\newcommand\id{\operatorname{id}}
\newcommand\modf{\operatorname{mod}}
\newcommand\Sym{\operatorname{S}}
\newcommand\V{\operatorname{V}}
\newcommand\Coh{\operatorname{Coh}}
\newcommand\Coker{\operatorname{Coker}}
\newcommand\End{\operatorname{End}}
\newcommand\Ext{\operatorname{Ext}}
\newcommand\Hom{\operatorname{Hom}}
\newcommand\Ima{\operatorname{Im}}
\newcommand\Kern{\operatorname{Ker}}
\newcommand\Mod{\operatorname{Mod}}
\newcommand\QCoh{\operatorname{QCoh}}
\newcommand\Spec{\operatorname{Spec}}
\newcommand\Uni{\operatorname{Uni}}
\title{The coherent cohomology ring of an algebraic group}
\author{Michel Brion}
\email{Michel.Brion@ujf-grenoble.fr}
\begin{document}
 
\begin{abstract}
Let $G$ be a group scheme of finite type over a field, and consider 
the cohomology ring $H^*(G)$ with coefficients in the structure sheaf. 
We show that $H^*(G)$ is a free module of finite rank over its component 
of degree 0, and is the exterior algebra of its component of degree 1. 
When $G$ is connected, we determine the Hopf algebra structure of $H^*(G)$.
\end{abstract}

\maketitle

\section{Introduction}
\label{sec:intro}

To each scheme $X$ over a field $k$, one associates the 
graded-commutative $k$-algebra
$H^*(X) := \bigoplus_{i \geq 0} H^i(X,\cO_X)$
with multiplication given by the cup product. 
Any morphism of schemes $f : X \to X'$ induces a pull-back homomorphism 
of graded algebras 
$f^* : H^*(X') \to H^*(X)$,
and there are K\"unneth isomorphisms 
$H^*(X) \otimes H^*(Y) \stackrel{\cong}{\longrightarrow} H^*(X \times Y)$. 
When $X$ is affine, the ``coherent cohomology ring'' $H^*(X)$ is just 
the algebra $\cO(X)$ of global sections of $\cO_X$.

\medskip

Now consider a $k$-group scheme $G$ with multiplication map
$\mu : G \times G \to G$, neutral element $e_G \in G(k)$, and 
inverse map $\iota :G \to G$. Then $H^*(G)$ has the structure of 
a graded Hopf algebra with comultiplication $\mu^*$, counit $e_G^*$ 
and antipode $\iota^*$. If $G$ acts on a scheme $X$ and $\cF$ is 
a $G$-linearized quasi-coherent sheaf on $X$, then the cohomology 
$H^*(X,\cF)$ is equipped with the structure of a graded comodule 
over $H^*(G)$.

\medskip

When $G$ is affine, the Hopf algebra $H^*(G) = \cO(G)$ uniquely 
determines the group scheme $G$. But this does not extend to an
arbitrary group scheme $G$; for example, if $G$ is an abelian
variety, then the structure of $H^*(G)$ only depends of $g := \dim(G)$.
Indeed, by a result of Serre (see~\cite[Chap.~7, Thm.~10]{Serre}), 
$H^*(G)$ is the exterior algebra $\Lambda^*(H^1(G))$; moreover, 
$H^1(G)$ has dimension $g$ and consists of the primitive
elements of $H^*(G)$ (recall that $\gamma \in H^*(G)$ is primitive 
if $\mu^*(\gamma) = \gamma \otimes 1 + 1 \otimes \gamma$).

\medskip

In the present article, we generalize this result as follows:

\begin{theorem}\label{thm:main}
Let $G$ be a group scheme of finite type over $k$. Then the graded 
algebra $H^*(G)$ is the exterior algebra of the $\cO(G)$-module
$H^1(G)$, which is free of finite rank.  

If $G$ is connected, then denoting by $P^*(G) \subset H^*(G)$ 
the graded subspace of primitive elements, we have an isomorphism 
of graded Hopf algebras
\[ H^*(G) \cong \cO(G) \otimes \Lambda^*(P^1(G)). \]
Moreover, $P^i(G) = 0$ for all $i \geq 2$.
\end{theorem}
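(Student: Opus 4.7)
The plan is to reduce to $G$ connected, then invoke Chevalley's structure theorem to present $G$ as an extension of an abelian variety by an affine group, and finally compute $H^*(G)$ by analyzing the Leray spectral sequence of the resulting quotient morphism. For the reduction, the neutral component $G^0 \subset G$ is a closed normal subgroup of finite index, and $G$ is a $G^0$-torsor over the finite \'etale $k$-group scheme $\pi_0(G) := G/G^0$. Using that this torsor becomes trivial after a finite \'etale base change, faithfully flat descent together with the K\"unneth formula would yield
\[ H^*(G) \;\cong\; H^*(G^0) \otimes_k \cO(\pi_0(G)) \]
as graded Hopf algebras. Since the second factor is concentrated in degree $0$, it then suffices to prove the theorem with $G$ replaced by $G^0$, and we may assume $G$ connected.

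Next, by the Chevalley--Rosenlicht structure theorem (in a form valid over an arbitrary base field), there is a normal closed affine subgroup $N \subset G$ such that $A := G/N$ is an abelian variety. Writing $\pi : G \to A$ for the quotient, the affineness of $N$ makes $\pi$ an affine morphism, so $R^i \pi_* \cO_G = 0$ for $i > 0$, and the Leray spectral sequence collapses to $H^i(G, \cO_G) \cong H^i(A, \pi_* \cO_G)$. The sheaf $\pi_* \cO_G$ is quasi-coherent on $A$ and, because left translations by $G$ commute with the right $N$-action defining the quotient, carries a natural translation-equivariant structure. By the theory of homogeneous sheaves on an abelian variety, any such sheaf decomposes (or admits a filtration) along characters in $\Pic^0(A)$; the cohomological vanishing $H^*(A, \cL) = 0$ for every $\cL \in \Pic^0(A) \setminus \{\cO_A\}$ then forces only the $\cO_A$-isotypic component to contribute to $H^*(A, \pi_* \cO_G)$. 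Identifying this component with $\cO(G) \otimes_k \cO_A$ and combining with Serre's theorem $H^*(A, \cO_A) = \Lambda^* H^1(A)$, we would obtain
\[ H^*(G) \;\cong\; \cO(G) \otimes_k \Lambda^* H^1(A), \]
exhibiting $H^*(G)$ as the exterior algebra of $H^1(G) \cong \cO(G) \otimes_k H^1(A)$, free of finite rank over $\cO(G)$.

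For the Hopf algebra structure, the pullback $\pi^* : H^1(A) \to H^1(G)$ maps into the primitive part $P^1(G)$, because $H^1(A) = P^1(A)$ by Serre's theorem and pullback respects comultiplication. A direct computation using the comultiplication on the tensor decomposition---group-like on $\cO(G)$ and primitive on the generators of $\Lambda^* H^1(A)$---will show that $P^1(G) = \pi^* H^1(A)$ and that no nonzero element of degree $\geq 2$ can satisfy the primitivity condition, so $P^i(G) = 0$ for $i \geq 2$ and the asserted Hopf isomorphism follows. The principal obstacle lies in the previous paragraph: one must carry out a Fourier--Mukai-style decomposition for the quasi-coherent (not merely coherent) equivariant sheaf $\pi_* \cO_G$, uniformly in the affine group scheme $N$, which in general mixes diagonalizable, unipotent and infinitesimal components, especially in positive characteristic. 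The uniform identification of the $\cO_A$-isotypic summand of $\pi_* \cO_G$ precisely with $\cO(G) \otimes_k \cO_A$ is the technical heart of the argument.
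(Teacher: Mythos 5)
Your reduction via Chevalley's theorem leads to a formula that is actually false, so the gap you flag at the end is not merely technical. Concretely, your second paragraph asserts $H^*(G)\cong \cO(G)\otimes_k\Lambda^*H^1(A)$ with $A=G/N$. Take $\charc(k)=0$ and $G=E$ the universal vector extension of an abelian variety $A$ of dimension $g\geq 1$ (so $N=V=H^1(A)^{\vee}$). Then $E$ is anti-affine, $\cO(E)=k$, and $H^i(E)=0$ for $i>0$ (Laumon's vanishing, reproved in the paper), whereas your formula predicts $H^*(E)\cong\Lambda^*H^1(A)\neq k$. The failure is exactly where you locate the ``technical heart'': when $N$ has a unipotent part, $\pi_*\cO_G$ is only \emph{filtered} by copies of $\cO_A$, not decomposed, and the extensions are non-split; the associated spectral sequence has non-trivial differentials (they are Koszul differentials, as in the paper's Remark~4.6), so the cohomology of $\pi_*\cO_G$ is much smaller than $\cO(G)\otimes H^*(A)$. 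The $\Pic^0$-isotypic argument with vanishing for non-trivial algebraically trivial line bundles correctly disposes of the toral part only. The correct answer for the anti-affine core is $\Lambda^*(W^{\vee})$ where $W=\Kern(\gamma\colon V\to U)$ is the kernel of the classifying map of the unipotent extension; in particular $H^1(G)$ is generally a proper quotient of $\cO(G)\otimes H^1(A)$, not isomorphic to it. Your final claim that $P^1(G)=\pi^*H^1(A)$ also fails in positive characteristic: the paper's Example~5.5 exhibits a smooth connected commutative $G$ with $\alpha^*\colon H^1(A(G))\to H^1(G)$ zero while $P^1(G)\cong k$, so primitives need not be pulled back from the abelian quotient.

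There are also secondary issues. The reduction to $G^0$ via $H^*(G)\cong H^*(G^0)\otimes\cO(\pi_0(G))$ as graded \emph{Hopf} algebras is not justified (and as Hopf algebras it is false in general, since $\pi_0(G)$ acts non-trivially; compare the paper's Remark~3.5); it is harmless only because the Hopf statement in the theorem assumes $G$ connected, but even as algebras the torsor-triviality argument needs rational points on the components, i.e.\ a descent step you do not supply. The paper avoids both problems by replacing the Chevalley extension with the affinization morphism $G\to\Spec\cO(G)=G/G_{\ant}$: one first proves, equivariantly, that $H^*(G)\cong\cO(G)\otimes H^*(G_{\ant})$ as $\cO(G)$-modules (valid for arbitrary, possibly disconnected $G$), then computes $H^*(G_{\ant})$ by killing the torus and inducting on the unipotent part, and finally upgrades the isomorphism to one of Hopf algebras using centrality of $G_{\ant}$ when $G$ is connected. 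If you want to salvage your Leray/Fourier--Mukai line of attack, you must work with the anti-affine subgroup rather than with an arbitrary affine normal subgroup, and you must track the unipotent filtration's differentials rather than assuming an isotypic splitting.
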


As a consequence, the graded Lie algebra 
$P^*(G)$ equals $P^0(G) \oplus P^1(G)$, and hence is abelian;
also, the vector space $P^1(G)$ is finite-dimensional.
Note that $P^0(G)$ consists of the homomorphisms of group schemes
$G \to \bG_a$; this vector space is finite-dimensional in
characteristic $0$, but not in prime characteristics (already for
$G = \bG_a$). 

\medskip

When $G$ is an abelian variety and $k$ is perfect, the structure 
of $H^*(G)$ follows readily from that of connected  
graded-commutative Hopf algebras (see~\cite[Thm.~6.1]{Borel})
and from the isomorphism of $H^1(G)$ with the Lie algebra of
the dual abelian variety (see~\cite[\S 13, Cor.~3]{Mumford}).
But for an arbitrary group scheme $G$, Theorem~\ref{thm:main} 
is not a direct consequence of general structure results on
Hopf algebras such as those of Cartier-Gabriel-Kostant 
(see~\cite[Thm. 8.1.5]{Sweedler}) 
and Milnor-Moore (see~\cite[\S 6]{MilnorMoore}), since $H^*(G)$ 
is neither connected nor cocommutative. Also, returning to the 
setting of schemes, the $\cO(X)$-module $H^*(X)$ is generally 
far from being free. For example, when $X$ is the punctured affine 
plane, $H^1(X)$ is a torsion module over $\cO(X) = k[x,y]$ and 
is not finitely generated.

\medskip

The proof of Theorem~\ref{thm:main} is based on the affinization 
theorem (see~\cite[Exp. VIB, Thm.~12.2]{SGA3}).
It asserts that $G$ has a smallest normal subgroup scheme $H$ 
such that the quotient $G/H$ is affine; then $\cO(G/H) \cong \cO(G)$ 
via the quotient morphism $G \to G/H$, which is therefore 
identified with the canonical morphism $G \to \Spec \cO(G)$. 
In particular, the $k$-algebra $\cO(G)$ is finitely generated.
Moreover, $H$ is smooth, connected and contained in the center 
of the neutral component $G^o$; in particular, $H$ is commutative. 
Also, we have $\cO(H) = k$, i.e., $H$ is ``anti-affine''. 
In fact, $H$ is the largest anti-affine subgroup scheme of $G$; 
we denote it by $G_{\ant}$.

\medskip

By analyzing the quotient morphism $G \to G/G_{\ant}$, we obtain
an isomorphism of $\cO(G)$-modules
$\psi : H^*(G) \stackrel{\cong}{\longrightarrow} 
\cO(G) \otimes H^*(G_{\ant})$
which identifies the pull-back $H^*(G) \to H^*(G_{\ant})$ to 
$e_G^* \otimes \id$ (Proposition~\ref{prop:mod}). 
On the other hand, using the structure of anti-affine groups
(see~\cite{Br09, Sancho}) and additional arguments, we show that
the Hopf algebra $H^*(G_{\ant})$ is the exterior algebra of 
$H^1(G_{\ant})$, a finite-dimensional vector space 
(Corollary~\ref{cor:pos} and Proposition~\ref{prop:zero}).
This implies the first assertion of Theorem~\ref{thm:main}.

\medskip

When $G$ is connected, we show that the above map $\psi$ is
an isomorphism of graded Hopf algebras (Proposition~\ref{prop:hopf});
moreover, $P^1(G) \cong H^1(G_{\ant})$ via pull-back. 
This yields a description of the primitive elements 
which takes very different forms in characteristic $0$ and in positive 
characteristics. We refer to Theorem~\ref{thm:pri} for the full 
statement, and mention a rather unexpected consequence: 
\emph{in positive characteristics, the group schemes $G$ such that 
$H^*(G) = k$ are trivial; in characteristic $0$, they are exactly the
fibered products $S \times_A E$, where $S$ is an anti-affine extension
of an abelian variety $A$ by a torus, and $E$ is the universal vector
extension of $A$} (Corollary~\ref{cor:acy}).

\medskip

A natural problem is to describe the coherent cohomology ring of 
group schemes over (say) discrete valuation rings. In this setting,
a version of the affinization theorem is known
(see~\cite[Exp.~VIB, Prop.~12.10]{SGA3}), but the structure 
of ``anti-affine'' group schemes is an open question.

\medskip

This article is organized as follows. Section~\ref{sec:ls} collects
preliminary results on linearized sheaves which should be well-known,
but which we could not locate in the literature under a form suited
for our purposes. In Section~\ref{sec:rag}, we show how to reduce 
the structure of $H^*(G)$ to that of $H^*(G_{\ant})$. The latter is 
determined in Section~\ref{sec:caag}, and our main results 
(Theorems~\ref{thm:main} and~\ref{thm:pri}) are proved in 
Section~\ref{sec:fin} by putting everything together.

\bigskip 

\noindent
{\bf Acknowledgements.} I thank the Tata Institute of Fundamental Research
and the Chennai Mathematical Institute for support, and B.~Conrad, 
O.~Gabber, S.~Kumar, G.~Laumon, C.~Teleman, J.~Weyman for very helpful 
discussions or e-mail exchanges.

\section{Linearized sheaves}
\label{sec:ls}

Throughout this article, we consider schemes and their morphisms over 
a fixed field $k$. Unless otherwise mentioned, schemes are assumed
to be separated and of finite type. We use \cite{SGA3} as a general
reference for group schemes, and fix such a group scheme $G$.

We begin by recalling some notions on actions of group schemes
(see~\cite[Exp.~I, \S 6]{SGA3}).
A $G$-\emph{scheme} is a scheme $X$ equipped with a $G$-action
\[ \alpha : G \times X \longrightarrow X, 
\quad (g,x) \longmapsto g \cdot x, \]
i.e., with a morphism of schemes that satisfies the axioms of 
a group action. Given two $G$-schemes $X$, $Y$, a morphism 
$u : X \to Y$ is $G$-\emph{equivariant} if the diagram
\begin{equation}\label{eqn:equi}
\CD
G \times X @>{\id \times u}>> G \times Y \\
@V{\alpha}VV @V{\beta}VV \\
X @>{u}>> Y \\
\endCD
\end{equation}
commutes, where $\beta$ denotes the $G$-action on $Y$; we also
say that $u$ is a $G$-\emph{morphism}.
 
A $G$-\emph{linearization} of a quasi-coherent sheaf $\cF$ on the 
$G$-scheme $X$ is an isomorphism
\begin{equation}\label{eqn:lin}
\Phi : \alpha^*(\cF) \stackrel{\cong}{\longrightarrow} p_2^*(\cF)
\end{equation}
(where $p_2 : G \times X \to X$ denotes the projection) such that 
the following cocycle condition holds: for any (commutative)
$k$-algebra $R$ and for any $g,h \in G(R)$, we have
\begin{equation}\label{eqn:coc}
\Phi_{gh} = \Phi_h \circ h^*(\Phi_g),
\end{equation}
where we denote by 
\[ \Phi_g : g^*(\cF_R) \stackrel{\cong}{\longrightarrow} \cF_R \]
the isomorphism of sheaves over $X_R := \Spec(R) \times X$ 
obtained from $\Phi$ by base change with 
$g \times \id: X_R \to G \times X$. A sheaf equipped with a 
$G$-linearization will be called a $G$-\emph{sheaf}. 

Given two $G$-sheaves $\cF$, $\cG$ on a $G$-scheme $X$, 
a morphism of sheaves of $\cO_X$-modules $\varphi : \cF \to \cG$ 
is $G$-\emph{equivariant}, or a $G$-\emph{morphism}, if the square
\[ \CD
g^*(\cF_R) @>{\Phi_g}>> \cF_R \\
@V{g^*(\varphi_R)}VV @V{\varphi_R}VV \\
g^*(\cG_R) @>{\Psi_g}>> \cG_R \\
\endCD \]
commutes for any $k$-algebra $R$ and any $g \in G(R)$, where $\Phi$
(resp. $\Psi$) denotes the linearization of $\cF$ (resp. $\cG$). 
The $G$-sheaves and $G$-morphisms form an abelian category that
we denote by $\QCoh^G(X)$; the coherent $G$-sheaves are the objects
of a full abelian subcategory, $\Coh^G(X)$. 
By~\cite[Lem.~1.4]{Thomason}, any $G$-sheaf is the direct limit of 
its coherent $G$-subsheaves. 

If $X = \Spec(k)$, then a quasi-coherent sheaf is just a $k$-vector
space $V$, and a $G$-linearization, a linear representation of $G$
on $V$. So $\QCoh^G(X)$ is equivalent to the category $\Mod(G)$ 
of $G$-modules, and $\Coh^G(X)$, to the full subcategory $\modf(G)$
of finite-dimensional $G$-modules. 

We will need the following variant of a result in~\cite[p. 94]{HL10}):

\begin{lemma}\label{lem:nat}
Let $u: X \to Y$ be an equivariant morphism of $G$-schemes, and
$\cF$ (resp. $\cG$) a $G$-sheaf on $X$ (resp. $Y$). Then the
higher direct images $R^i u_*(\cF)$ $(i \geq 0)$, and the pull-back 
$u^*(\cG)$ are equipped with natural structures of $G$-sheaves.
In particular, $H^i(X,\cF)$ is a $G$-module for any $G$-sheaf $\cF$.

If in addition $u$ sits in a cartesian square of equivariant 
morphisms of $G$-schemes,
\[ \CD
X' @>{u'}>> Y' \\
@V{v'}VV @V{v}VV \\
X @>{u}>> Y, 
\endCD \]
where $v$ is flat, then the base change isomorphism
\[ \theta_{u,v} : v^* R^i u_*(\cF) 
\stackrel{\cong}{\longrightarrow} 
R^i u'_* v'^*(\cF) \]
is $G$-equivariant.
\end{lemma}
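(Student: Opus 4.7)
The plan is to build the two $G$-linearizations and then verify that the base change isomorphism of the last assertion is equivariant.

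\textbf{Pull-back.} I apply $(\id_G \times u)^*$ to the linearization $\Psi : \beta^*\cG \xrightarrow{\sim} p_2^*\cG$ of $\cG$. Using the equivariance identity $\beta \circ (\id_G \times u) = u \circ \alpha$ from~(\ref{eqn:equi}) and the obvious $p_2 \circ (\id_G \times u) = u \circ p_2$, this produces the isomorphism $\alpha^*u^*\cG \xrightarrow{\sim} p_2^*u^*\cG$, which is the required linearization of $u^*\cG$. The cocycle~(\ref{eqn:coc}) is inherited from $\Psi$ by functoriality of pull-back.

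\textbf{Higher direct images.} The key observation is that the equivariance square~(\ref{eqn:equi}) is cartesian: the natural morphism $G\times X \to X \times_Y (G\times Y)$ sending $(g,x)$ to $(g\cdot x, (g, u(x)))$ has inverse $(x', (g, y)) \mapsto (g, g^{-1}\cdot x')$, the fiber-product condition $u(x') = g\cdot y$ ensuring well-definedness. Moreover $\beta$ is flat, since $\beta = p_2 \circ \sigma'$ with $\sigma' : (g,y) \mapsto (g, g\cdot y)$ an automorphism and $p_2$ flat. Flat base change therefore gives a natural isomorphism $\beta^* R^iu_*\cF \xrightarrow{\sim} R^i(\id\times u)_* \alpha^*\cF$. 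Composing this with $R^i(\id\times u)_*(\Phi) : R^i(\id\times u)_* \alpha^*\cF \xrightarrow{\sim} R^i(\id\times u)_* p_2^*\cF$ and with the inverse flat base change for the analogous projection square (which is also cartesian with flat verticals) yields the candidate linearization $\beta^* R^iu_*\cF \xrightarrow{\sim} p_2^* R^iu_*\cF$ of $R^iu_*\cF$.

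\textbf{Cocycle and base change.} The cocycle condition~(\ref{eqn:coc}) for the new linearization follows from that of $\Phi$ because every intermediate isomorphism (flat base change, pull-back, derived functor) is natural with respect to $R$-valued points; the check reduces to commutativity of a diagram on $G\times G\times X$ obtained by stacking two copies of the previous construction. For the last assertion, both $v^*R^iu_*\cF$ and $R^iu'_*v'^*\cF$ acquire $G$-sheaf structures from the previous steps, and the equivariance of $\theta_{u,v}$ amounts to commutativity of a cube of cartesian squares formed by attaching the $G$-action at each vertex of the given square. This is an instance of the transitivity of flat base change along composed morphisms; the main technical obstacle throughout is the combinatorial bookkeeping of these cartesian subsquares, but no conceptually new ingredient beyond naturality of flat base change is required.
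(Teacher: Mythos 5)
Your proposal is correct and follows essentially the same route as the paper: the linearization of $R^iu_*(\cF)$ is defined as the composite of the flat base change isomorphisms for the cartesian squares over $\beta$ and $p_2$ with $R^i(\id\times u)_*$ applied to $\Phi$, and the cocycle condition and the equivariance of $\theta_{u,v}$ are reduced, exactly as in the paper, to naturality and transitivity of base change isomorphisms (the paper phrases this as ``compatibility of base change isomorphisms with isomorphisms of sheaves'', using $\theta_{u,gh}=\theta_{u,h}\circ h^*(\theta_{u,g})$). The only differences are cosmetic: you spell out the cartesianness of the square (\ref{eqn:equi}) and the flatness of $\beta$, which the paper takes for granted, while being slightly more schematic in the final diagram chases.
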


\begin{proof}
The assertions on the pull-back $u^*(\cG)$ and the direct image
$u_*(\cF)$ are special cases 
of~\cite[Exp.~I, Rem.~6.5.2, Lem.~6.6.1]{SGA3}. The assertion
on higher direct images is checked similarly; we provide additional 
details on base change isomorphisms (treated as equalities 
in~[loc.~cit.]) for completeness.

Since $\beta$ is flat, the cartesian square (\ref{eqn:equi}) 
yields a base change isomorphism
\[ \theta_{u,\beta} : \beta^* R^i u_*(\cF) 
\stackrel{\cong}{\longrightarrow} 
R^i(\id \times u)_* \alpha^*(\cF). \]
We obtain similarly an isomorphism 
\[ \theta_{u,p_2} : p_2^* R^i u_*(\cF) 
\stackrel{\cong}{\longrightarrow} 
R^i(\id \times u)_* p_2^*(\cF). \]
Thus, there is a unique isomorphism 
\[ \Psi : \beta^* R^i u_*(\cF) 
\stackrel{\cong}{\longrightarrow} p_2^* R^iu_*(\cF) \]
such that the square
\[ \CD
\beta^* R^iu_*(\cF) @>{\theta_{u,\beta}}>>  
R^i(\id \times u)_* \alpha^*(\cF) \\
@V{\Psi}VV @V{R^i(\id \times u)_* \alpha^*(\Phi)}VV \\
p_2^* R^iu_*(\cF) @>{\theta_{u,p_2}}>> R^i(\id \times u)_* p_2^*(\cF) \\
\endCD \]
commutes; then for any $g \in G(R)$, the induced morphism
$\Psi_g : g^* R^iu_*(\cF) \to R^iu_*(\cF)$ satisfies
$\Psi_g = R^iu_*(\Phi_g) \circ \theta_{u,g}$. 
To show that $\Psi$ is a $G$-linearization of $R^i u_*(\cF)$, 
it remains to check that
$\Psi_{gh} = \Psi_h \circ h^*(\Psi_g)$ for all $g,h \in G(R)$. Using 
the analogous condition for $\Phi$ and the equality
\[ \theta_{u,gh} = \theta_{u,h} \circ h^*(\theta_{u,g}), \]
this reduces to checking that 
$\theta_{u,h} \circ h^* R^iu_*(\Phi_g) = 
R^iu_* h^*(\Phi_g) \circ \theta_{u,h}$, 
i.e., the square
\[ \CD
h^* R^i u_* g^*(\cF_R)  @>{h^* R^iu_*(\Phi_g)}>>  h^* R^iu_*(\cF_R) \\
@V{\theta_{u,h}}VV  @V{\theta_{u,h}}VV \\
R^iu_* h^*g^*(\cF_R)  @>{R^iu_* h^*(\Phi_g)}>>  R^iu_* h^*(\cF_R) \\
\endCD 
\]
commutes. But this follows from the compatibility of base change
isomorphisms with isomorphisms of sheaves. 

Finally, the assertion on $\theta_{u,v}$ is equivalent to
the commutativity of the square
\[ \CD
g^* v^* R^iu_*(\cF_R) @>>> v^* R^iu_*(\cF_R) \\
@V{g^*(\theta_{u,v})}VV @V{\theta_{u,v}}VV \\
g^* R^iu'_* v'^*(\cF_R) @>>> R^iu'_* v'^*(\cF_R) \\
\endCD \]
for any $g \in G(R)$, where the horizontal maps are induced by the
linearizations. Using the equivariance of $u,v,u',v'$,
this amounts to the commutativity of the square
\[ \CD
v^* R^iu_*(g^*\cF_R) @>{v^* R^iu_*(\Phi_g)}>> v^* R^iu_*(\cF_R) \\
@V{\theta_{u,v}}VV @V{\theta_{u,v}}VV \\
R^iu'_* v'^*(g^*\cF_R) @>{R^iu'_*v'^*(\Phi_g)}>> R^iu'_* v'^*(\cF_R) \\
\endCD \]
which follows again from the compatibility of base change
isomorphisms with isomorphisms of sheaves.
\end{proof}

We also record the following variant 
of~\cite[Exp.~I, Prop.~6.6.2]{SGA3}:

\begin{lemma}\label{lem:com}
Let $\cF$ be a $G$-sheaf on a $G$-scheme $X$. Then $H^*(X,\cF)$
is a graded left Hopf comodule over the graded Hopf algebra $H^*(G)$. 
Moreover, the comodule map
\[ \Delta : H^*(X,\cF) \longrightarrow H^*(G) \otimes H^*(X,\cF) \]
is compatible with the $G$-module structure, i.e., the square
\begin{equation}\label{eqn:com}
\CD
H^*(X,\cF) @>{\Delta}>> H^*(G) \otimes H^*(X,\cF) \\  
@VVV @V{g^* \times \id}VV \\
H^*(X_R,\cF_R) @>{g^*}>> H^*(X_R,\cF_R)  \\
\endCD
\end{equation}
(where the left vertical arrow is the pull-back map)
commutes for any $k$-algebra $R$ and any $g \in G(R)$.
\end{lemma}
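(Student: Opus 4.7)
The plan is to build $\Delta$ by combining pull-back along the action, the linearization isomorphism $\Phi$, and the K\"unneth formula, and then to derive the comodule axioms and the compatibility~\eqref{eqn:com} from the cocycle condition~\eqref{eqn:coc} together with the naturality of base change. Since $G$ is flat over $k$, K\"unneth produces an isomorphism
\[ \kappa : H^*(G) \otimes H^*(X,\cF) \stackrel{\cong}{\longrightarrow} H^*(G \times X, p_2^*\cF), \]
and I would then set
\[ \Delta := \kappa^{-1} \circ H^*(\Phi) \circ \alpha^*, \]
where $\alpha^* : H^*(X,\cF) \to H^*(G \times X, \alpha^*\cF)$ is pull-back and $H^*(\Phi)$ is induced by~\eqref{eqn:lin}.

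For coassociativity, I would first recast~\eqref{eqn:coc} as the scheme-level identity
\[ (\mu \times \id)^* \Phi = p_{23}^* \Phi \circ (\id \times \alpha)^* \Phi \]
of isomorphisms of quasi-coherent sheaves on $G \times G \times X$, where $p_{23} : G \times G \times X \to G \times X$ is the projection onto the last two factors. Applying $H^*$, inserting K\"unneth in its triple form $H^*(G) \otimes H^*(G) \otimes H^*(X,\cF) \cong H^*(G \times G \times X, p_3^*\cF)$, and comparing with the two iterated versions of $\Delta$ yields $(\mu^* \otimes \id) \circ \Delta = (\id \otimes \Delta) \circ \Delta$. Counitality follows from the fact that $\Phi_{e_G} = \id_\cF$, which is forced by~\eqref{eqn:coc} applied with $g = h = e_G$: pulling back along $\{e_G\} \times X \hookrightarrow G \times X$ then shows that $(e_G^* \otimes \id) \circ \Delta = \id$.

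For the compatibility~\eqref{eqn:com}, I would fix $g \in G(R)$ and pull back the whole composition defining $\Delta$ along $g \times \id : X_R \to G \times X$. Under this pull-back, $\alpha^*$ becomes pull-back along the action of $g$ on $X_R$, the map $H^*(\Phi)$ becomes $H^*(\Phi_g)$ by the very definition of $\Phi_g$ in~\eqref{eqn:coc}, and $\kappa$ behaves compatibly with $g^* \otimes \id$ by naturality of the K\"unneth isomorphism under base change (which is also what underlies Lemma~\ref{lem:nat}). Tracking this through produces exactly the action $g^*$ on $H^*(X_R, \cF_R)$, giving~\eqref{eqn:com}.

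The main obstacle I expect is the coassociativity step: one needs to pass cleanly from the $R$-points cocycle condition~\eqref{eqn:coc} to a scheme-level identity on $G \times G \times X$, and then to thread this identity through two applications of K\"unneth while keeping track of the various coherent identifications of pull-backs of $\cF$. Once coassociativity is established, counitality and~\eqref{eqn:com} reduce to standard naturality arguments in the spirit of Lemma~\ref{lem:nat}.
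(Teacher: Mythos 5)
Your construction of $\Delta$ as $\kappa^{-1}\circ H^*(\Phi)\circ\alpha^*$, with the comodule axioms deduced from the cocycle condition~(\ref{eqn:coc}) (via the universal point of $G\times G$) and the compatibility~(\ref{eqn:com}) from naturality of K\"unneth under pull-back along $g\times\id$, is exactly the argument the paper has in mind: the proof is omitted there with the remark that it follows the lines of Lemma~\ref{lem:nat} with base-change functoriality replaced by K\"unneth functoriality. Your proposal is correct and takes essentially the same route.
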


We omit the proof which follows similar lines as that of 
Lemma~\ref{lem:nat}, the functorial properties of base change
isomorphisms being replaced with those of K\"unneth isomorphisms.

Note that the component of bi-degree $(0,i)$, 
$\Delta^{(0,i)} : H^i(X,\cF) \to \cO(G) \otimes H^i(X,\cF)$,
is the comodule map for the $G$-module structure of 
$H^i(X,\cF)$; the image of $\Delta^{(0,i)}$ is the subspace of
$G$-invariants, where $G$ acts on $\cO(G) \otimes H^i(X,\cF)$
via its action on $\cO(G)$ by right multiplication, 
and its action on $H^i(X,\cF)$ defined in Lemma~\ref{lem:nat}.

We now turn to the behavior of linearized sheaves under torsors,
and obtain a slight generalization of~\cite[Thm. 4.2.14]{HL10}:

\begin{lemma}\label{lem:tor}
Let $H$ be a group scheme, $X$ a $G \times H$-scheme, $Y$ an $H$-scheme,
and $u : X \to Y$ a $G$-torsor which is also $H$-equivariant. Then
the pull-back $u^*$ and the invariant direct image $u_*^G$ yield 
equivalences of categories 
\[ \QCoh^H(Y) \cong \QCoh^{G \times H}(X), \quad 
\Coh^H(Y) \cong \Coh^{G \times H}(X). \]
\end{lemma}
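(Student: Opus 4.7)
The plan is to bootstrap from the case $H = \{e\}$, which is exactly~\cite[Thm.~4.2.14]{HL10}, giving inverse equivalences $u^*$ and $u_*^G$ between $\QCoh(Y)$ and $\QCoh^G(X)$, and similarly for coherent sheaves. The key observation is that $\QCoh^{G \times H}(X)$ can be identified with the full subcategory of $\QCoh^G(X)$ whose objects carry a further $H$-linearization commuting with the given $G$-linearization: since the group law on $G \times H$ is a direct product, the single cocycle~(\ref{eqn:coc}) for the combined action decomposes into cocycles for each factor, together with a commutation of the two resulting linearizations. It therefore suffices to check that, under the classical descent equivalence, $H$-linearizations on $\cG \in \QCoh(Y)$ correspond bijectively to $H$-linearizations on $u^*\cG$ that commute with its descent $G$-linearization.

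For the forward direction, given an $H$-sheaf $\cG$ on $Y$, the pullback $u^*\cG$ receives a canonical $H$-linearization from Lemma~\ref{lem:nat}, because $u$ is $H$-equivariant; and this $H$-linearization commutes with the tautological descent $G$-linearization on $u^*\cG$ because the $G$- and $H$-actions on $X$ commute, so the cartesian squares defining the two base-change isomorphisms commute, and Lemma~\ref{lem:nat} propagates this commutativity to the linearizations. Conversely, given a $(G \times H)$-sheaf $\cF$ on $X$, the direct image $u_*\cF$ acquires an $H$-linearization from Lemma~\ref{lem:nat} applied to the $H$-equivariant morphism $u$; since the $H$-action and the $G$-action on $X$ commute, this $H$-linearization preserves the subsheaf $u_*^G(\cF) \subseteq u_*\cF$ cut out by $G$-invariance, and the non-equivariant descent identifies $u_*^G(\cF)$ as an $H$-sheaf on $Y$.

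The main obstacle will be the bookkeeping required to verify that the two constructions are mutually inverse \emph{as equivariant functors}: one must check that the $H$-linearization produced by Lemma~\ref{lem:nat} on $u^*\cG$ really is the one matching the given $H$-linearization on $\cG$ under classical descent, and symmetrically that the $H$-linearization on $u_*^G(\cF)$ recovers the original one after pullback along $u$. In both cases this reduces to the naturality of base-change isomorphisms, i.e., to the second assertion of Lemma~\ref{lem:nat} applied to the cartesian squares produced by the commuting $G$- and $H$-actions. The coherent case follows from the quasi-coherent one, using that $u$ is faithfully flat of finite presentation so that coherence descends along $u_*^G$ and is preserved by $u^*$.
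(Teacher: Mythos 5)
Your proposal is correct and follows essentially the same route as the paper: reduce to the case of trivial $H$, handled by faithfully flat descent along the torsor $u$, and then transport the $H$-structure using Lemma~\ref{lem:nat} for pull-backs and by descending the $H$-linearization to $u_*^G(\cF)$ for the quasi-inverse. The only notable differences are matters of bookkeeping: the paper secures the base case via fppf descent from \cite{SGA1} (which covers torsors under arbitrary finite-type group schemes and quasi-coherent sheaves, rather than relying on the setting of \cite[Thm.~4.2.14]{HL10}), and it descends the $H$-linearization by descent for morphisms of sheaves instead of your decomposition of $G \times H$-linearizations into commuting pairs.
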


\begin{proof}
Consider first the case where $H$ is trivial. Then a $G$-linearization 
of a quasi-coherent sheaf $\cF$ is just a descent data for the faithfully 
flat morphism $u$. Thus, $u^*$ is an equivalence from 
$\QCoh^H(Y)$ to $\QCoh^{G \times H}(X)$, 
by~\cite[Exp. VIII, Cor. 1.3]{SGA1}; it restricts to an equivalence
from $\Coh^H(Y)$ to $\Coh^{G \times H}(X)$ by~[loc. cit., Rem. 1.12]. 
Moreover, the natural map
$\cF \to u_*^G u^*(\cF)$ is an isomorphism, as follows 
from~[loc. cit., Cor. 1.7]. This proves the assertion in this case.

In the general case, note that $u^*(\cG)$ is a $G \times H$-sheaf on $X$ 
for any $H$-sheaf $\cG$ on $Y$, by Lemma~\ref{lem:nat} applied to the
$G \times H$-equivariant morphism $u$ (relative to the trivial action 
of $G$ on $Y$). Conversely, for any $G \times H$-sheaf $\cF$ on $X$, 
the data of its $H$-linearization descends to an $H$-linearization 
of $u_*^G(\cF)$, as follows from descent for morphisms of 
sheaves (see~[loc. cit., Cor. 1.2]).
\end{proof}

Next, consider a subgroup scheme $H$ of $G$ and an $H$-scheme $Y$.
Then $H$ acts freely on $G \times Y$ via 
$h \cdot (g,y) := (gh^{-1}, hy)$; we assume that the quotient 
$X := (G \times Y)/H$ is a scheme, and denote it by $G \times^H Y$. 
We have a $G$-action on $X$ via left multiplication on $G$, and a
cartesian square of $G$-morphisms
\begin{equation}\label{eqn:ass} 
\xymatrix{
G \times Y \ar[r]^{p_1}\ar[d]_r & G \ar[d]^q \\
X \ar[r]^f & G/H, \\}
\end{equation}
where $p_1$ denotes the projection, and $q$, $r$ the quotients by $H$. 
The fiber of $f$ at the base point of $G/H$ is identified to $Y$; let 
\[ j : Y \longrightarrow X \]
be the corresponding closed immersion, and
\[ p_2 : G \times Y \longrightarrow Y \]
the projection. We may now state the following variant 
of~\cite[Lem. 1.3]{Thomason}:

\begin{lemma}\label{lem:ass}
With the above notation, the pull-back $j^*$ and the composition
$r_*^H \circ p_2^*$ yield equivalences 
$\QCoh^G(X) \cong \QCoh^H(Y)$ and $\Coh^G(X) \cong \Coh^H(Y)$.
\end{lemma}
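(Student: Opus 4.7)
The plan is to apply Lemma~\ref{lem:tor} twice, to the two torsor structures carried by the cartesian square~(\ref{eqn:ass}), and then identify the resulting composite equivalence with $j^*$.

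First, I would note that $r : G \times Y \to X$ is an $H$-torsor (by the very construction $X = G \times^H Y$) and is also $G$-equivariant, where $G$ acts on $G \times Y$ by left translation on the first factor. Applying Lemma~\ref{lem:tor} with the roles of $G$ and $H$ exchanged, $r^*$ and $r_*^H$ provide inverse equivalences
\[ \QCoh^G(X) \; \cong \; \QCoh^{G \times H}(G \times Y), \]
and similarly on coherent subcategories. Second, the projection $p_2 : G \times Y \to Y$ is the trivial $G$-torsor, and is $H$-equivariant for the prescribed actions $h \cdot (g,y) = (gh^{-1},hy)$ and $h \cdot y = hy$. A direct application of Lemma~\ref{lem:tor} then yields inverse equivalences
\[ \QCoh^H(Y) \; \cong \; \QCoh^{G \times H}(G \times Y) \]
via $p_2^*$ and $(p_2)_*^G$, again compatibly with coherent sheaves. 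Composing gives the desired equivalence $\QCoh^G(X) \cong \QCoh^H(Y)$, and in one direction it clearly sends $\cG$ to $r_*^H p_2^*(\cG)$, as claimed.

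It remains to identify the inverse functor $\cF \mapsto (p_2)_*^G r^*(\cF)$ with $j^*$. For this I would use that $r$ factors as $r = \alpha_X \circ (\id_G \times j)$, where $\alpha_X : G \times X \to X$ is the action (indeed $r(g,y) = g \cdot [e,y] = [g,y]$). Pulling back the linearization isomorphism $\alpha_X^*(\cF) \cong \pi_X^*(\cF)$ of $\cF$ along $\id_G \times j$ then yields a canonical isomorphism $r^*(\cF) \cong p_2^* j^*(\cF)$ on $G \times Y$, and applying $(p_2)_*^G$ together with $(p_2)_*^G p_2^* \cong \id$ gives $(p_2)_*^G r^*(\cF) \cong j^*(\cF)$, as required.

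The main obstacle is this last step: one must verify that the isomorphism $r^*(\cF) \cong p_2^* j^*(\cF)$ is genuinely $G \times H$-equivariant for the two natural linearizations, which arise quite differently (the one on $r^*\cF$ is inherited from the linearization of $\cF$ via Lemma~\ref{lem:nat}, whereas the one on $p_2^* j^*\cF$ is the tautological linearization of the trivial $G$-torsor). Verifying this reduces to a diagram chase unwinding the cocycle condition~(\ref{eqn:coc}) for $\cF$, and the whole argument carries over unchanged to the coherent setting since $r$ and $p_2$ preserve coherence.
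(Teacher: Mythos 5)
Your proposal is correct, and its main structure coincides with the paper's: both apply Lemma~\ref{lem:tor} twice, to the $G$-equivariant $H$-torsor $r : G \times Y \to X$ and to the $H$-equivariant (trivial) $G$-torsor $p_2 : G \times Y \to Y$, to get the equivalence realized by $r_*^H \circ p_2^*$. Where you diverge is in identifying $j^*$ as the inverse: you compute the inverse functor $(p_2)_*^G \circ r^*$ and match it with $j^*$ via the factorization $r = \alpha_X \circ (\id_G \times j)$ and the linearization isomorphism $\alpha_X^*(\cF) \cong \pi_X^*(\cF)$, which forces you to verify that the resulting isomorphism $r^*(\cF) \cong p_2^*\,j^*(\cF)$ is $G \times H$-equivariant (a genuine but routine unwinding of the cocycle condition~(\ref{eqn:coc}), as you note). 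The paper instead factors the other map: $j = r \circ s$ with $s : Y \to G \times Y$, $y \mapsto (e_G,y)$, a section of $p_2$, and simply computes $j^*(r_*^H p_2^*(\cG)) = s^* r^* r_*^H p_2^*(\cG) \cong s^* p_2^*(\cG) = \cG$, using only the unit of the descent equivalence $r^* r_*^H \cong \id$ already furnished by Lemma~\ref{lem:tor}. That route buys you a shorter argument that stays inside the already-equivariant functors and so sidesteps the equivariance check you flag as the main obstacle; your route has the mild advantage of exhibiting the inverse functor explicitly as $(p_2)_*^G \circ r^*$. Either way the coherent case follows as you say, so your argument is sound once the flagged diagram chase is carried out.
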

 
\begin{proof}
Applying Lemma~\ref{lem:tor} to the $G$-equivariant $H$-torsor 
$r : G \times Y \to X$ and to the $H$-equivariant $G$-torsor
$p_2 : G \times Y \to Y$, we obtain that $r_*^H \circ p_2^*$ is 
an equivalence from $\QCoh^H(Y)$ to $\QCoh^G(X)$,
and likewise for coherent sheaves. 
We now show that $j^*$ yields the inverse equivalence.
Note that $j = r \circ s$, where 
\[ s : Y \longrightarrow G \times Y, \quad y \longmapsto (e_G,y) \] 
is a section of $p_2$. Let $\cF$ be a $G$-sheaf on $X$; then 
$\cF = r_*^H p_2^*(\cG)$ for a unique $H$-sheaf $\cG$ on $Y$. 
Thus, $j^*(\cF) = s^* r^* r_*^H p_2^*(\cG) = s^* p_2^*(\cG) = \cG$.
\end{proof}

In particular, if $Y = \Spec(k)$ then $X = G/H$; also, recall that
an $H$-sheaf on $Y$ is just an $H$-module. Thus, Lemma~\ref{lem:ass}
yields familiar equivalences of categories 
\[ \QCoh^G(G/H) \cong \Mod(H), \quad \Coh^G(G/H) \cong \modf(H). \]
For any $H$-module $M$, we denote by 
\[ \cL_{G/H}(M) := q_*^H(\cO_G \otimes M) \]
the associated $G$-sheaf on $G/H$, where we recall that 
$q : G \to G/H$ stands for the quotient morphism.

Returning to an arbitrary $H$-scheme $Y$, we obtain a variant 
of~\cite[5.19]{Jantzen}:

\begin{lemma}\label{lem:coh}
With the above notation, let $\cF$ be a $G$-sheaf on 
$X = G \times^H Y$, and $\cG := j^*(\cF)$ the corresponding $H$-sheaf 
on $Y$. Then for any $i \geq 0$, there is an isomorphism of $G$-sheaves 
\[ R^if_*(\cF) \cong \cL_{G/H}(H^i(Y,\cG)). \]
\end{lemma}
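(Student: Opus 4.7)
The plan is to combine flat base change (Lemma~\ref{lem:nat}), torsor descent (Lemmas~\ref{lem:tor} and~\ref{lem:ass}), and the explicit identification $\cL_{G/H}(V) = q_*^H(\cO_G \otimes V)$. Throughout, every scheme and morphism in the square~(\ref{eqn:ass}) carries a natural $G \times H$-action ($G$ acts by left multiplication on $G$, $G\times Y$ and $X$, while $H$ acts by right multiplication on $G$, by the twisted action on $G \times Y$, and trivially on $X$ and $G/H$), and each of $q$, $r$, $p_1$, $f$ is $G \times H$-equivariant.

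First, since $q:G \to G/H$ is faithfully flat (it is an $H$-torsor), Lemma~\ref{lem:nat}, applied to the group scheme $G \times H$, produces a $G \times H$-equivariant base change isomorphism
\[ q^* R^i f_*(\cF) \stackrel{\cong}{\longrightarrow} R^i p_{1,*}\, r^*(\cF). \]
Next, by Lemma~\ref{lem:ass}, $\cF \cong r_*^H p_2^*(\cG)$, and descent along the $H$-torsor $r$ gives $r^* r_*^H p_2^*(\cG) \cong p_2^*(\cG)$ as $G \times H$-sheaves on $G \times Y$; hence $r^*(\cF) \cong p_2^*(\cG)$.

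Now apply flat base change once more, this time to the cartesian square with vertices $G \times Y$, $G$, $Y$, $\Spec(k)$, using the flatness of $G \to \Spec(k)$. Lemma~\ref{lem:nat} (again applied to $G \times H$) yields a $G \times H$-equivariant isomorphism
\[ R^i p_{1,*}\, p_2^*(\cG) \cong \cO_G \otimes_k H^i(Y,\cG), \]
where $G$ acts on the right-hand side via its action on $\cO_G$ by left translation (and trivially on $H^i(Y,\cG)$), while $H$ acts via its action on $\cO_G$ by right translation together with its natural action on $H^i(Y,\cG)$.

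Concatenating these isomorphisms gives $q^* R^i f_*(\cF) \cong \cO_G \otimes_k H^i(Y,\cG)$ as $G \times H$-sheaves on $G$. Since $q$ is an $H$-torsor, Lemma~\ref{lem:tor} provides the quasi-inverse $q_*^H$ to $q^*$ on $G$-sheaves, so applying $q_*^H$ to both sides yields
\[ R^i f_*(\cF) \cong q_*^H\bigl(\cO_G \otimes_k H^i(Y,\cG)\bigr) = \cL_{G/H}\bigl(H^i(Y,\cG)\bigr). \]
The main obstacle is not the construction of any single arrow but the bookkeeping: each base change and each descent identification must carry the full $G \times H$-equivariance through the chain, so that taking $H$-invariants at the end furnishes the correct $G$-linearization on $R^i f_*(\cF)$. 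Lemmas~\ref{lem:nat}, \ref{lem:tor} and~\ref{lem:ass} were set up precisely so that this bookkeeping is automatic.
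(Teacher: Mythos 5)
Your argument is correct and follows essentially the same route as the paper: flat base change along the $H$-torsor $q$ in the cartesian square~(\ref{eqn:ass}), the identification $r^*(\cF) \cong p_2^*(\cG)$ via Lemmas~\ref{lem:ass} and~\ref{lem:tor}, the computation $R^i(p_1)_* p_2^*(\cG) \cong \cO_G \otimes H^i(Y,\cG)$, and finally descent by applying $q_*^H$. The only difference is that you spell out the $G \times H$-equivariance bookkeeping more explicitly than the paper does, which is harmless.
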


\begin{proof}
The cartesian square (\ref{eqn:ass}), where $q$ is flat, yields a 
pull-back isomorphism
\[ q^* R^i f_*(\cF) \stackrel{\cong}{\longrightarrow} 
R^i (p_1)_* r^*(\cF)\]
which is a morphism of $G \times H$-sheaves in view of 
Lemmas~\ref{lem:nat} and~\ref{lem:tor}. 
But $\cF \cong r_*^Hp_2^*(\cG)$ by Lemma~\ref{lem:ass}, and hence
$r^*(\cF) \cong p_2^*(\cG)$ by Lemma~\ref{lem:tor} again. Thus,
\[ q^* R^i f_*(\cF) \cong R^i (p_1)_* p_2^*(\cG)
\cong \cO_G \otimes H^i(Y,\cG). \]
This yields the required isomorphism
$R^i f_*(\cF) \cong q_*^H(\cO_G \otimes H^i(Y,\cG))$.
\end{proof}

\section{Reduction to an anti-affine group}
\label{sec:rag}

We still consider a group scheme $G$ of finite type over $k$. 
The action of $G \times G$ on $G$ 
via left and right multiplication: $(x,y) \cdot z := x z y^{-1}$, 
equips $\cO_G$ with the structure of a $G \times G$-sheaf. By 
Lemmas~\ref{lem:nat} and~\ref{lem:com}, this defines a structure of 
$G \times G$-module on $H^*(G)$ which is compatible with
its structure of graded Hopf algebra; in particular, with its
structure of $\cO(G)$-module. We now describe the 
$(G \times G)$-$\cO(G)$-module $H^*(G)$ in terms of the largest 
anti-affine subgroup $G_{\ant}$, by carefully keeping track of 
all the actions:

\begin{proposition}\label{prop:coh}
For each integer $i \geq 0$, there is an isomorphism
\begin{equation}\label{eqn:iso}
\varphi_i : H^i(G) \stackrel{\cong}{\longrightarrow}
(\cO(G \times G) \otimes H^i(G_{\ant}))^G,
\end{equation}
where the right-hand side denotes the subspace of $G$-invariants
for the action of $G$ on $\cO(G \times G) \otimes H^i(G_{\ant})$
via its action on $G \times G$ by right multiplication
$(t \cdot (z,w) = (zt^{-1}, wt^{-1}))$, and its action
on $G_{\ant}$ by conjugation. 

Moreover, $\varphi_i$ is a morphism of 
$(G \times G)$-$\cO(G)$-modules, where $\cO(G)$ acts on the 
right-hand side via the algebra homomorphism
\[ (\id \times \iota^*) \circ \mu^* : \cO(G) \longrightarrow 
\cO(G \times G), \quad 
f \longmapsto ((z,w) \mapsto f(zw^{-1})), \]
and $G \times G$ acts via its action on $G \times G$ by left
multiplication: $(x,y) \cdot (z,w) = (xz,yw)$.

Finally, the square
\begin{equation}\label{eqn:coh}
\CD
H^i(G)  @>{\varphi_i}>> (\cO(G \times G) \otimes H^i(G_{\ant}))^G \\
@V{j^*}VV @V{\delta^* \otimes \id}VV \\
H^i(G_{\ant})  @>{\Delta}>> (\cO(G) \otimes H^i(G_{\ant}))^G \\
\endCD
\end{equation}
commutes, where $j : G_{\ant} \to G$ denotes the inclusion, 
$\delta: G \to G \times G$ the diagonal (so that 
$\delta^* : \cO(G \times G) = \cO(G) \otimes \cO(G) \to \cO(G)$ 
is the multiplication), and $\Delta$ the comodule map
for the $G$-module structure of $H^i(G_{\ant})$.
\end{proposition}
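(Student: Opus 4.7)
Set $H := G_{\ant}$ throughout. The plan is to construct $\varphi_i$ from the $(G \times G)$-comodule structure on $H^i(G)$, then prove it is an isomorphism by identifying both sides with $\cO(G) \otimes H^i(H)$.

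\emph{Construction.} The biequivariant action $(x,y) \cdot z = xzy^{-1}$ makes $\cO_G$ a $(G \times G)$-sheaf, and Lemma~\ref{lem:com} supplies a comodule map
\[
\widetilde\Delta : H^i(G) \longrightarrow \cO(G \times G) \otimes H^i(G);
\]
I set $\varphi_i := (\id \otimes j^*) \circ \widetilde\Delta$. For $(t,t) \in (G \times G)(R)$, the action on $G$ is conjugation by $t$, which preserves $H$ because $H$ is normal in $G$, so $j^*$ intertwines this with the corresponding conjugation action on $H$. Combined with the compatibility \eqref{eqn:com} of $\widetilde\Delta$ with the $(G \times G)$-module structure, this shows that $\varphi_i(\alpha)$ lies in the stated $G$-invariants. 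The remaining left $(G \times G)$-equivariance on $\cO(G \times G)$, and the $\cO(G)$-linearity through the composite $(\id \times \iota^*) \circ \mu^*$, follow from coassociativity of $\widetilde\Delta$ and from Lemma~\ref{lem:com} applied to the multiplication of $G$.

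\emph{Main step: bijectivity.} I would compute both sides independently. Applying Lemma~\ref{lem:coh} to the quotient $q : G \to G/H$, viewed as $G = G \times^H H$ with $H$ acting on itself by left multiplication, yields $R^i q_*(\cO_G) \cong \cL_{G/H}(H^i(H))$; since $G/H = \Spec \cO(G)$ is affine,
\[
H^i(G) \cong (\cO(G) \otimes H^i(H))^H,
\]
with $H$ acting on $\cO(G)$ by right translation and on $H^i(H)$ by left translation on itself. Because $H$ is anti-affine, $\cO(H) = k$ forces every rational $H$-comodule to be trivial (the structure map $V \to V \otimes k = V$ must equal the counit, i.e., the identity), so the $H$-action on $H^i(H)$ is trivial and the invariants collapse to $\cO(G/H) \otimes H^i(H) = \cO(G) \otimes H^i(H)$. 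For the right-hand side, the morphism $\pi : G \times G \to G$, $(z,w) \mapsto zw^{-1}$, is a trivial right-diagonal $G$-torsor; the change of coordinates $(z,w) \leftrightarrow (zw^{-1},w)$ turns the $G$-action into right translation on the second factor, and the standard identity $(\cO(G) \otimes V)^G \cong V$ for a rational $G$-module $V$ produces
\[
(\cO(G \times G) \otimes H^i(H))^G \cong \cO(G) \otimes H^i(H).
\]
A direct unraveling of these two identifications shows that $\varphi_i$ is transported to the identity map, hence is an isomorphism. Evaluating $\varphi_i(\alpha)$ at the diagonal $(t,t)$ gives the restriction to $H$ of the conjugate $c_t^* \alpha$, which matches $\Delta(j^*\alpha)(t)$ for the $G$-comodule structure $\Delta$ on $H^i(H)$; this yields the commutative square.

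The main obstacle is the bookkeeping of the several interacting actions (left and right translation on each factor of $G \times G$, the conjugation action of $G$ on $H$, and its induced action on $H^i(H)$) and the verification that the two independent identifications of each side with $\cO(G) \otimes H^i(H)$ are compatible in the precise way that conjugates $\varphi_i$ to the identity. The collapse of the $H$-invariants on the left crucially uses the anti-affineness of $H$, which is exactly why the statement is formulated with $H = G_{\ant}$ rather than any other normal subgroup.
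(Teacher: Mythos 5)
Your construction of the map is reasonable and in fact differs from the paper's: you build $\varphi_i = (\id \otimes j^*)\circ\widetilde\Delta$ from the $H^*(G\times G)$-comodule structure of Lemma~\ref{lem:com}, and your arguments that its image lies in the stated $G$-invariants, that it is $(G\times G)$-equivariant for left translation on $\cO(G\times G)$, and that it is $\cO(G)$-linear for $(\id\times\iota^*)\circ\mu^*$ are standard comodule identities that can be made precise (the last one also uses $\cO(G_{\ant})=k$, so that the degree-$(0,0)$ component of $\varphi$ is exactly $(\id\times\iota^*)\circ\mu^*$). The paper instead gets the map, with all equivariances built in, from Lemma~\ref{lem:coh} applied to the $(G\times G)$-equivariant fibration $G\cong(G\times G)\times^K G_{\ant}\to(G\times G)/K$ with $K=\delta(G)(e_G\times G_{\ant})$.

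The genuine gap is in your bijectivity step. You identify each side separately with $\cO(G)\otimes H^i(G_{\ant})$ --- the left side via Lemma~\ref{lem:coh} for $q:G\to G/G_{\ant}$ (plus anti-affineness and Leray over the affine base), the right side via the coordinate change $(z,w)\mapsto(zw^{-1},w)$ --- but having two abstract isomorphisms of both sides with the same space says nothing about the specific map $\varphi_i$; the assertion ``a direct unraveling of these two identifications shows that $\varphi_i$ is transported to the identity'' is exactly the nontrivial content, and you do not carry it out (you even flag it yourself as ``the main obstacle''). Concretely, what has to be verified is the compatibility between the comodule map of Lemma~\ref{lem:com} (a pullback along the action composed with a K\"unneth isomorphism) and the flat-base-change identification underlying Lemma~\ref{lem:coh}; this is precisely what the paper's $K$-equivariant setup produces automatically, and it is also what is needed to make your ``evaluate at $(t,t)$'' argument for the square (\ref{eqn:coh}) rigorous on higher cohomology classes. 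A workable patch in your framework would be to use that $\varphi_i$ is a $(G\times G)$-equivariant morphism of $\cO(G)$-modules between two modules you have shown to be free of the same rank, and to check it becomes an isomorphism after the base change $\otimes_{\cO(G)}k$ at $e_G$ --- but that check is again the base-change comparison (the map (\ref{eqn:ten}) in the paper, identified with $j^*$ via Lemma~\ref{lem:coh}), so it must be done explicitly rather than asserted. As it stands, the central step of the proposition is missing.
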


\begin{proof}
We identify the $G \times G$-scheme $G$ to the quotient 
$(G \times G)/\delta(G)$; similarly, the $G \times G$-scheme
$G/G_{\ant}$ is identified to $(G \times G)/K$, where 
\[ K := \delta(G) (G_{\ant} \times G_{\ant}) 
= \delta(G) (e_G \times G_{\ant}). \]
Thus, $K$ is isomorphic to the semi-direct product $G_{\ant} \ltimes G$, 
where $G$ acts on $G_{\ant}$ by conjugation. Moreover, the quotient 
morphism $G \to G/G_{\ant}$ is identified to the natural morphism
\[ f : (G \times G)/\delta(G)  = (G \times G) \times^K  K/\delta(G)
\cong (G \times G) \times^K G_{\ant} \longrightarrow (G \times G)/K, \]
where $K$ acts on $G_{\ant}$ via the action of $G_{\ant}$ on itself
by multiplication, and the action of $G$ by conjugation. 
In view of Lemma~\ref{lem:coh}, this yields 
an isomorphism of $G \times G$-linearized sheaves of 
$\cO_{G/G_{\ant}}$-modules:
\[ R^i f_*(\cO_G) \cong q_*^K(\cO_{G\times G} \otimes H^i(G_{\ant})), \]
where $q : G \times G \to (G \times G)/K$ denotes the quotient morphism,
and $K$ acts on $H^i(G_{\ant})$ via its above action on $G_{\ant}$. 
Since $G/G_{\ant}$ is affine, we obtain an isomorphism of 
$(G \times G)$-$\cO(G/G_{\ant})$-modules 
\[ H^i(G) \cong (\cO(G \times G) \otimes H^i(G_{\ant}))^K. \]
But $\cO(G/G_{\ant}) \cong \cO(G)$; moreover, the anti-affine group 
$G_{\ant}$ acts trivially on its module 
$\cO(G \times G) \otimes H^i(G_{\ant})$. This yields the isomorphism
(\ref{eqn:iso}).

To show the final assertion, note that $j^* : H^i(G) \to H^i(G_{\ant})$
factors as the natural map
\[ H^i(G) \longrightarrow 
H^i(G) \otimes_{\cO(G)} k = H^i(G) \otimes_{\cO(G/G_{\ant})} k \] 
(associated to $e_G^*: \cO(G) \to k$, or equivalently, to    
$e_{G/G_{\ant}}^*: \cO(G/G_{\ant}) \to k$), followed by the map 
\begin{equation}\label{eqn:ten}
H^i(G) \otimes_{\cO(G/G_{\ant})} k \longrightarrow H^i(G_{\ant})
\end{equation}
obtained by base change in the cartesian square
\[ \CD
G_{\ant} @>>> \Spec(k) \\
@V{j}VV @V{e_{G/G_{\ant}}}VV \\
G @>{f}>> G/G_{\ant}.\\
\endCD \]
But this base change map yields an isomorphism 
$e_{G/G_{\ant}}^* R^if_*(\cO_G) \stackrel{\cong}{\to} H^i(G_{\ant})$
by Lemma~\ref{lem:coh}, and hence (\ref{eqn:ten}) is an isomorphism
as well. This identifies $j^*$ to $\otimes_{\cO(G)} k$ 
for the structure of $\cO(G)$-module on 
$(\cO(G \times G) \otimes H^i(G_{\ant}))^G$, and 
implies the commutativity of (\ref{eqn:coh}) by using the fact 
that $\delta(G)$ is the fiber at $e_G$ of the morphism 
$\mu \circ (\id \times \iota): G \times G \to G$, 
$(z,w) \mapsto z w^{-1}$.
\end{proof}

The above result entails a description of the comultiplication 
$\mu^* : H^*(G) \longrightarrow H^*(G) \otimes H^*(G)$,  
since the latter is the comodule map for the $G$-action on 
$H^*(G)$ via left multiplication on $G$. Likewise, the counit and
antipode may be described via the isomorphisms (\ref{eqn:iso}).
But the resulting determination of the Hopf algebra structure of
$H^*(G)$ is very indirect, and will not be developed here.

We now obtain a simpler version of Proposition~\ref{prop:coh}, 
where the left and right $G$-actions take different forms:

\begin{proposition}\label{prop:mod}
For each integer $i \geq 0$, there is an isomorphism of 
$(G \times G)$-$\cO(G)$-modules
\[ \psi_i: H^i(G) \stackrel{\cong}{\longrightarrow} 
\cO(G) \otimes H^i(G_{\ant}), \] 
where $G \times e_G$ (resp. $e_G \times G$) acts on the right-hand side
via its action on $G$ by left multiplication (resp. its action on $G$
by right multiplication, and on $H^i(G_{\ant})$ by conjugation
on $G_{\ant}$), and $\cO(G)$ acts by multiplication on itself. Moreover, 
the triangle
\begin{displaymath}
\xymatrix{
H^i(G)  \ar[r]\ar[dr]_{j^*}  & 
 \cO(G) \otimes H^i(G_{\ant}) \ar[d]^{e_G^* \otimes \id} \\
  &  H^i(G_{\ant}) \\
}
\end{displaymath}
commutes.

In particular, there are isomorphisms of vector spaces
\[ H^i(G)^{G \times e_G} \cong H^i(G)^{e_G \times G} 
\cong H^i(G_{\ant}), \quad 
H^i(G)^{G \times G} \cong H^i(G_{\ant})^G. \]
\end{proposition}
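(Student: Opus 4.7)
The plan is to derive Proposition~\ref{prop:mod} from Proposition~\ref{prop:coh} by a change of variables on $G \times G$ that decouples the $G$-action being invariantized from the remaining structures. Consider the automorphism
$$\tau : G \times G \longrightarrow G \times G, \quad (z,w) \longmapsto (zw^{-1}, w),$$
with inverse $(a, b) \mapsto (ab, b)$. In the new coordinates $(a, b) = \tau(z, w)$, direct computation shows that the invariantized $G$-action $t \cdot (z, w) = (zt^{-1}, wt^{-1})$ becomes $t \cdot (a, b) = (a, bt^{-1})$, touching only $b$; the $\cO(G)$-module structure $f \mapsto ((z,w) \mapsto f(zw^{-1}))$ becomes $f \mapsto ((a,b) \mapsto f(a))$, realizing $\cO(G)$ as the first tensor factor of $\cO(G) \otimes \cO(G) = \cO(G \times G)$; and the left $(G \times G)$-action $(x, y) \cdot (z, w) = (xz, yw)$ becomes $(x, y) \cdot (a, b) = (xay^{-1}, yb)$.

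Because the invariantized $G$-action is now trivial on the $a$-factor, setting $V := H^i(G_{\ant})$ we may factor out the first copy of $\cO(G)$ to obtain
$$(\cO(G \times G) \otimes V)^G \;\cong\; \cO(G) \otimes (\cO(G) \otimes V)^G.$$
A standard computation identifies $(\cO(G) \otimes V)^G$ with $V$ via evaluation at $b = e_G$, the inverse being the coaction $v \mapsto (b \mapsto b^{-1} \cdot v)$. This yields an isomorphism $\Xi : (\cO(G \times G) \otimes V)^G \stackrel{\cong}{\to} \cO(G) \otimes V$; we set $\psi_i := \Xi \circ \varphi_i$. Intertwining of the $\cO(G)$-action and of the $G \times e_G$-action (via $(x, e_G) \cdot (a, b) = (xa, b)$, which becomes pure left translation on $\cO(G)$) is immediate. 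The $e_G \times G$-action $(a, b) \mapsto (ay^{-1}, yb)$ mixes the two factors; tracking it through the coaction identification shows that the $b$-component converts the trivial action on $V$ into conjugation by $y$, while the $a$-component becomes right translation on $\cO(G)$, as required. For the triangle, note that in the new coordinates $\delta^*$ corresponds to evaluating $a$ at $e_G$ (since $\delta(g) = (g, g)$ yields $a = gg^{-1} = e_G$); hence $e_G^* \otimes \id$ on $\cO(G) \otimes V$ corresponds, via $\Xi^{-1}$, to $\delta^* \otimes \id$ followed by $\Delta^{-1}$, and the commutativity of the square (\ref{eqn:coh}) gives $(e_G^* \otimes \id) \circ \psi_i = j^*$.

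The main obstacle is precisely this action bookkeeping: the automorphism $\tau$, while visibly decoupling the invariantized $G$, mixes the two left copies of $G$, and the conjugation action of $e_G \times G$ on $V$ appears only indirectly, through the interaction of the left action on $b$ with the coaction identification $(\cO(G) \otimes V)^G \cong V$. Once the intertwining is verified, the three assertions about invariants in the final sentence of the proposition follow at once: left-translation invariants of $\cO(G)$ are the constants, giving $(\cO(G) \otimes V)^{G \times e_G} \cong V$; the coaction identification gives $(\cO(G) \otimes V)^{e_G \times G} \cong V$; and combining both reduces the $(G \times G)$-invariants to $V^G = H^i(G_{\ant})^G$.
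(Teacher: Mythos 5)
Your proposal is correct and follows essentially the same route as the paper's proof: the same change of variables $(z,w)\mapsto (zw^{-1},w)$ on $G\times G$, the factorization $(\cO(G\times G)\otimes M)^G \cong \cO(G)\otimes(\cO(G)\otimes M)^G \cong \cO(G)\otimes M$ via the comodule-map identification, and the deduction of the triangle from the square (\ref{eqn:coh}) of Proposition~\ref{prop:coh}. The action bookkeeping you carry out (conjugation emerging from the $b$-translation through the coaction identification, and the invariant computations) matches what the paper leaves as ``follows readily.''
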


\begin{proof}
Consider the automorphism $u$ of $G \times G$ given by
$u(z,w) = (zw^{-1},w)$; then 
$u((x,y) \cdot (z,w) \cdot t) = (x z w^{-1} y^{-1}, y w t^{-1})$.
The induced automorphism $u^*$ of 
$\cO(G \times G) = \cO(G) \otimes \cO(G)$ 
yields an isomorphism for any $G$-module $M$:
\[ u^* \otimes \id : (\cO(G) \otimes \cO(G) \otimes M)^G 
\stackrel{\cong}{\longrightarrow} 
\cO(G) \otimes ( \cO(G) \otimes M)^G, \]
where $G$ acts on $\cO(G) \otimes \cO(G) \otimes M$ 
(resp. on $\cO(G) \otimes M$) via its right action on 
$G \times G$ (resp. on $G$) and its given action on $M$. 

Also, recall the isomorphism of $G$-modules
\[ e_G^* \otimes \id : (\cO(G) \otimes M)^G 
\stackrel{\cong}{\longrightarrow} M, \]
where the invariants in the left-hand side are taken for
the $G$-action on $\cO(G)$ via right multiplication, and 
the given $G$-action on $M$ (this isomorphism is the inverse
of the comodule map 
$\Delta : M \stackrel{\cong}{\longrightarrow} (\cO(G)\otimes M)^G$).
Thus, we obtain an isomorphism
\[ (\cO(G \times G) \otimes M)^G 
\stackrel{\cong}{\longrightarrow}
\cO(G) \otimes M, \]
which is also $\cO(G)$-linear for the action of $\cO(G)$ on
$\cO(G) \otimes \cO(G)$ via $(\id \times \iota^*) \circ \mu^*$,
and $G \times G$-equivariant for the action on the left-hand side
via left multiplication on $G \times G$, and on the right-hand side 
as in the statement. 

Taking $M = H^i(G_{\ant})$ and applying Proposition~\ref{prop:coh},
we obtain the isomorphism $\psi_i$ and its compatibility properties.
The assertions on invariants follow readily.
\end{proof}

The $G \times G$-invariants in $H^*(G)$ are related to the 
primitive elements as follows: 

\begin{proposition}\label{prop:pri}
For each integer $i \geq 1$, the space of homogeneous primitive elements 
of degree $i$ satisfies 
\[ P^i(G) \subset H^i(G)^{G \times G}, \]
with equality for $i = 1$. 
\end{proposition}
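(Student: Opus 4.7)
The plan is to translate $G \times G$-invariance of $\gamma \in H^i(G)$ into Hopf-algebraic conditions on $\mu^*(\gamma)$, and then to compare with the primitivity equation $\mu^*(\gamma) = \gamma \otimes 1 + 1 \otimes \gamma$. By Lemma~\ref{lem:com}, the $G \times e_G$-action on $H^i(G)$ (coming from left translation $(x,z) \mapsto xz$, whose action map is $\mu$ itself) has comodule map given by the bidegree $(0,i)$-component of $\mu^* : H^i(G) \to H^*(G) \otimes H^*(G)$. The $e_G \times G$-action (coming from $(y,z) \mapsto z y^{-1}$, whose action map $\rho$ factors as $\mu \circ \tau \circ (\iota \times \id)$, with $\tau$ the swap) has comodule map given by the $(0,i)$-component of $\rho^* = (\iota^* \otimes \id) \circ \tau^* \circ \mu^*$. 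Invariance under $G \times G$ is the conjunction of invariance under these two subgroups.

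For the inclusion $P^i(G) \subset H^i(G)^{G \times G}$ with $i \geq 1$, let $\gamma$ be primitive of degree $i$. The Hopf algebra antipode axiom applied to $\gamma$ yields $\gamma + \iota^*(\gamma) = 0$ (since $e_G^*(\gamma) = 0$), hence $\iota^*(\gamma) = -\gamma$. Since $\gamma \otimes 1$ has bidegree $(i,0)$ and $1 \otimes \gamma$ has bidegree $(0,i)$, the $(0,i)$-component of $\mu^*(\gamma)$ is $1 \otimes \gamma$, giving left-invariance. Using $\tau^*(\gamma \otimes 1) = 1 \otimes \gamma$, $\tau^*(1 \otimes \gamma) = \gamma \otimes 1$ and $\iota^*(\gamma) = -\gamma$, a direct computation gives $\rho^*(\gamma) = 1 \otimes \gamma - \gamma \otimes 1$, whose $(0,i)$-component is again $1 \otimes \gamma$; this gives right-invariance.

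For equality at $i = 1$: by K\"unneth, $H^1(G \times G) = \cO(G) \otimes H^1(G) \oplus H^1(G) \otimes \cO(G)$. Given $\gamma \in H^1(G)^{G \times G}$, write $\mu^*(\gamma) = A + D$ with $A \in \cO(G) \otimes H^1(G)$ and $D \in H^1(G) \otimes \cO(G)$; left-invariance forces $A = 1 \otimes \gamma$. The operator $(\iota^* \otimes \id) \tau^*$ exchanges the two summands, so right-invariance reduces to the single equation $(\iota^* \otimes \id) \tau^*(D) = 1 \otimes \gamma$ in $\cO(G) \otimes H^1(G)$. Picking a $k$-basis $\{\beta_\alpha\}$ of $H^1(G)$ and expanding $D = \sum_\alpha \beta_\alpha \otimes h_\alpha$ with $h_\alpha \in \cO(G)$ and $\gamma = \sum_\alpha c_\alpha \beta_\alpha$ with $c_\alpha \in k$, this equation becomes $\sum_\alpha \iota^*(h_\alpha) \otimes \beta_\alpha = \sum_\alpha c_\alpha \cdot (1 \otimes \beta_\alpha)$; linear independence of $\{\beta_\alpha\}$ forces $\iota^*(h_\alpha) = c_\alpha$ in $\cO(G)$, and since $\iota^*$ fixes $k \subset \cO(G)$, $h_\alpha = c_\alpha$. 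Hence $D = \gamma \otimes 1$, and $\gamma$ is primitive.

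The main obstacle is carefully identifying the comodule map of the right-translation-with-inversion action via the decomposition $\rho = \mu \circ \tau \circ (\iota \times \id)$, and pinning down the antipode identity $\iota^*(\gamma) = -\gamma$ on positive-degree primitives. The restriction $i = 1$ in the converse is essential: for $i \geq 2$ the space $H^i(G \times G)$ acquires mixed summands $H^p(G) \otimes H^q(G)$ with $p, q \geq 1$ on which $G \times G$-invariance no longer forces $\mu^*(\gamma)$ into the primitive form $\cO(G) \otimes H^i(G) + H^i(G) \otimes \cO(G)$.
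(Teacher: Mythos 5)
Your proof is correct and follows essentially the same route as the paper: translate $G\times e_G$- and $e_G\times G$-invariance into conditions on the components of $\mu^*$ via the comodule maps of Lemma~\ref{lem:com}, observe that primitivity forces both conditions, and note that for $i=1$ there are no mixed K\"unneth summands, so invariance forces primitivity. The only difference is that you spell out the paper's ``likewise'' step for the right action explicitly through the factorization $\rho = \mu\circ\tau\circ(\iota\times\id)$ and a basis computation, which is a harmless (and correct) elaboration.
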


\begin{proof}
Recall that for any $G$-module $M$, we have 
$M^G = \{ m \in M ~\vert~ \Delta(m) = 1 \otimes m \}$,
where $\Delta : M \to \cO(G) \otimes M$ denotes the comodule map. 
Together with Lemma~\ref{lem:com}, it follows that
\[ H^i(G)^{G \times e_G} = \{ \gamma \in H^i(G) ~\vert~ 
\mu^{(0,i)}(\gamma) = 1 \otimes \gamma \}, \]
where $\mu^{(0,i)} : H^i(G) \to \cO(G) \otimes H^i(G)$ 
denotes the component of bi-degree $(0,i)$ of the comultiplication
\[ \mu^* : H^i(G) \to H^i(G \times G) 
= \bigoplus_{i_1, i_2 ; i_1 + i_2 = i} H^{i_1}(G) \otimes H^{i_2}(G). \]
Likewise, 
\[ H^i(G)^{e_G \times G} = \{ \gamma \in H^i(G) ~\vert~ 
\mu^{(i,0)}(\gamma) = \gamma \otimes 1 \}. \]
As a consequence,
\[ H^i(G)^{G \times G} = \{ \gamma \in H^i(G) ~\vert~ 
\mu^*(\gamma) - \gamma \otimes 1 - 1 \otimes \gamma 
\in \bigoplus_{i_1, i_2 > 0; i_1 + i_2 = i} 
H^{i_1}(G) \otimes H^{i_2}(G)\}. \]
This yields our statement.
\end{proof}

Next, we obtain a refinement of Proposition~\ref{prop:mod}:

\begin{proposition}\label{prop:hopf}
If $G$ is connected, then we have equalities of subspaces 
of $H^*(G)$:
\[ H^*(G)^{G \times e_G} = H^*(G)^{e_G \times G} = H^*(G)^{G \times G} \]
and this subspace is a graded Hopf subalgebra, isomorphic to 
$H^*(G_{\ant})$ via $j^*$. Moreover, we have an isomorphism of 
graded Hopf algebras
\[ H^*(G) \cong \cO(G) \otimes H^*(G)^{G\times G}. \]
\end{proposition}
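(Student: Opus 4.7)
The plan is to handle the three claims of the proposition in sequence---equality of the invariant subspaces, the sub-Hopf-algebra property, and the Hopf algebra decomposition---where connectedness of $G$ plays a decisive role only in the first step.

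First, I would transport the three invariant subspaces across the module isomorphism $\psi_i$ of Proposition~\ref{prop:mod}. On $\cO(G) \otimes H^i(G_{\ant})$, the $G \times e_G$-action operates only on $\cO(G)$ by left translation, so its invariants equal $1 \otimes H^i(G_{\ant})$. The $e_G \times G$-action is diagonal---right translation on $\cO(G)$ and conjugation on $H^i(G_{\ant})$---and by the standard isomorphism $(\cO(G) \otimes M)^G \cong M$ recalled in the proof of Proposition~\ref{prop:mod}, these invariants equal the image of the comodule map $\Delta : H^i(G_{\ant}) \to \cO(G) \otimes H^i(G_{\ant})$ for the conjugation action. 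Since $G$ is connected we have $G = G^o$, which centralizes $G_{\ant}$, so conjugation acts trivially on $H^i(G_{\ant})$; thus $\Delta(m) = 1 \otimes m$ and the image is again $1 \otimes H^i(G_{\ant})$. The three invariant subspaces therefore coincide, and the commutative triangle of Proposition~\ref{prop:mod} shows that $j^*$ restricts to an isomorphism of the common subspace onto $H^*(G_{\ant})$; in particular $\psi_i(\gamma) = 1 \otimes j^*(\gamma)$ for every $\gamma \in H^i(G)^{G \times G}$.

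Second, I would verify the sub-Hopf-algebra property. Subalgebra is immediate since $G \times G$ acts by algebra automorphisms. For the coproduct: $\mu$ is equivariant for the target action $(x,y) \cdot z = xzy^{-1}$ and the source action $(x,y) \cdot (a,b) = (xa, by^{-1})$, so $\mu^*$ maps $H^*(G)^{G \times G}$ into the corresponding invariants of $H^*(G \times G)$. Via K\"unneth and the elementary identity $(X \otimes V) \cap (V \otimes Y) = X \otimes Y$ for subspaces of a vector space, these invariants equal $H^*(G)^{G \times e_G} \otimes H^*(G)^{e_G \times G}$, which by Step~1 coincides with $H^*(G)^{G \times G} \otimes H^*(G)^{G \times G}$. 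The counit lands in $k$ automatically, and the antipode preserves the subspace because $\iota$ intertwines the $(x,y)$- and $(y,x)$-actions on $G$. Because $j$ is a group homomorphism, $j^*$ is a Hopf algebra map, and its restriction yields an isomorphism $H^*(G)^{G \times G} \cong H^*(G_{\ant})$ of graded Hopf algebras.

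Third, I would exhibit the decomposition via the multiplication map $m : \cO(G) \otimes H^*(G)^{G \times G} \to H^*(G)$, $f \otimes \gamma \mapsto f\gamma$. Combining the $\cO(G)$-linearity of $\psi$ with the formula $\psi(\gamma) = 1 \otimes j^*(\gamma)$ from Step~1 gives $\psi \circ m = \id_{\cO(G)} \otimes j^*$, so $m$ is a bijection. It is a graded algebra map because $\cO(G) = H^0(G)$ sits in degree $0$ and thus graded-commutes with everything under cup product. It is a coalgebra map because $\mu^*$ is a ring homomorphism, so $\mu^*(f\gamma) = \mu^*(f)\mu^*(\gamma)$ matches the tensor-product coproduct on $\cO(G) \otimes H^*(G)^{G \times G}$ applied to $f \otimes \gamma$.

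The main obstacle is Step~1, specifically the identification of $H^i(G)^{e_G \times G}$ with $1 \otimes H^i(G_{\ant})$ under $\psi_i$: this is precisely where connectedness enters, through the centrality of $G_{\ant}$ in $G^o$ which trivializes the conjugation action on $H^i(G_{\ant})$. Once this equality is in place, the sub-Hopf-algebra property and the final Hopf algebra isomorphism follow from formal compatibilities.
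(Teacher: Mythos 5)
Your proposal is correct and follows essentially the same route as the paper: it uses Proposition~\ref{prop:mod} together with the centrality of $G_{\ant}$ in the connected group $G$ to identify all three invariant subspaces with $1 \otimes H^*(G_{\ant})$, then the equivariance of $\mu$ for the twisted $G \times G$-action $(x,y)\cdot(z,w) = (xz, wy^{-1})$ to get the subcoalgebra (hence Hopf subalgebra) property, and finally the multiplication map against $\cO(G)$, inverted via $\psi$, to obtain the decomposition. The only differences are expository: you spell out the K\"unneth/invariants step and the verification that the multiplication map is a morphism of coalgebras, which the paper leaves implicit.
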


\begin{proof}
By Proposition~\ref{prop:mod}, we may identify the $G \times G$-module
$H^*(G)$ to $\cO(G) \otimes H^*(G_{\ant})$, where $G \times G$
acts on $\cO(G)$ via left and right multiplication, and $G \times G$
acts trivially on $H^*(G_{\ant})$ (since $G_{\ant}$ is central in $G$ 
in view of the connectedness assumption). Then
\[ H^*(G)^{G \times e_G} = H^*(G)^{e_G \times G} = 
1 \otimes H^*(G_{\ant}), \]
since $\cO(G)^{G \times e_G} = \cO(G)^{e_G \times G} = k$. This proves
the equalities. 

The comultiplication
\[ \mu^* : H^*(G) \longrightarrow H^*(G) \otimes H^*(G) \] 
sends $H^*(G)^{G \times G}$ to 
$H^*(G)^{G \times e_G} \otimes H^*(G)^{e_G \times G}$, since the
multiplication $\mu : G \times G \to G$ is equivariant for the action 
of $G \times G$ on itself via $(x,y) \cdot (z,w) = (xz, w y^{-1})$,
and for the action of $G \times G$ on $G$ via left and right 
multiplication. Thus, $H^*(G)^{G \times G}$ is a subcoalgebra of 
$H^*(G)$. Likewise, the antipode $\iota^*: H^*(G) \to H^*(G)$ preserves 
$H^*(G)^{G \times G}$. Hence $H^*(G)^{G \times G}$ is a Hopf subalgebra; 
it is mapped isomorphically to $H^*(G_{\ant})$ by the morphism 
of Hopf algebras $j^*$, in view of Proposition~\ref{prop:mod}. 

For the final assertion, note that the multiplication of $H^*(G)$
induces an isomorphism 
$\cO(G) \otimes (1 \otimes H^*(G_{\ant})) 
\stackrel{\cong}{\longrightarrow} H^*(G),$
by Proposition~\ref{prop:mod} again.
\end{proof}

\begin{remark}\label{rem:nc}
When $G$ is not connected, the three subspaces 
$H^*(G)^{G \times e_G}$, $H^*(G)^{e_G \times G}$ and 
$H^*(G)^{G \times G}$ of $H^*(G)$ are generally distinct.

For example, let $E$ be an elliptic curve, and $G$ the 
semi-direct product of $E$ with the group of order $2$ acting 
on $E$ via multiplication by $\pm 1$. Then $G_{\ant} = E$ and 
$H^*(G_{\ant})^G = k$. 
Thus, $H^1(G)^{G \times G} = 0$, while $H^1(G)^{G \times e_G}$ and 
$H^1(G)^{e_G \times G}$ are two distinct copies of $k$ in 
$H^1(G) \cong k^2$. In particular, $P^1(G) = 0$.
\end{remark}

\section{The cohomology algebra of an anti-affine group}
\label{sec:caag}

Throughout this section, we assume that $G$ is anti-affine,
i.e., $\cO(G) = k$.
Recall from~\cite[\S 2]{Br09} that $G$ sits in a unique extension
of smooth connected commutative group schemes,
\[ 0 \longrightarrow T \times U \longrightarrow G 
\stackrel{\alpha}{\longrightarrow} A \longrightarrow 0, \]
where $A$ is an abelian variety, $T$ is a torus, and $U$ is 
unipotent; moreover, $U$ is trivial if $\charc(k) > 0$. 
Thus, we obtain two extensions
\[ 0 \longrightarrow T \longrightarrow G/U 
\stackrel{\alpha_T}{\longrightarrow} A \longrightarrow 0, \]
\[ 0 \longrightarrow U \longrightarrow G/T 
\stackrel{\alpha_U}{\longrightarrow} A \longrightarrow 0, \]
where $G/U$ and $G/T$ are anti-affine as well. Note that $G/U$ is a
semi-abelian variety, and $G/T$ an extension of an abelian variety
by a vector group; also, we have an isomorphism of group schemes
\begin{equation}\label{eqn:str}
G \cong G/U \times_A G/T.
\end{equation}

This yields a useful reduction to the case where $T$ is trivial:

\begin{proposition}\label{prop:iso}
With the above notation, the pull-back under the quotient morphism
$G \to G/T$ yields an isomorphism of graded Hopf algebras
$H^*(G/T) \cong H^*(G)$.
\end{proposition}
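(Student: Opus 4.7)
The plan is to analyze $\pi_T: G \to G/T$ via the fiber product structure (\ref{eqn:str}). Since $G \cong G/U \times_A G/T$, the projection $\pi_T$ is obtained by base change of the $T$-torsor $\alpha_T: G/U \to A$ along $\alpha_U: G/T \to A$; in particular $\pi_T$ is itself a $T$-torsor, hence an affine morphism, so $R^i(\pi_T)_* \cO_G = 0$ for $i > 0$ and the Leray spectral sequence yields $H^*(G) = H^*(G/T, (\pi_T)_* \cO_G)$. Decomposing $\cO_G$ into $T$-weight spaces gives $(\pi_T)_* \cO_G = \bigoplus_{\chi \in X^*(T)} \cL_\chi$, where $\cL_\chi$ is the line bundle on $G/T$ associated to the character $\chi$; the base change identifies $\cL_\chi \cong \alpha_U^* \cM_\chi$, where $\cM_\chi$ is the corresponding line bundle on $A$ attached to the torsor $\alpha_T$.

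The key point is that only the summand $\chi = 0$ contributes. Because $\alpha_T$ is a homomorphism of group schemes, each $\cM_\chi$ lies in $\Pic^0(A)$. The analogous decomposition $\cO(G/U) = \bigoplus_\chi H^0(A, \cM_\chi)$, together with the fact that $H^0(A, \cL) = 0$ for any non-trivial $\cL \in \Pic^0(A)$, shows that anti-affinity of $G/U$ forces $\cM_\chi$ to be non-trivial whenever $\chi \neq 0$. Mumford's vanishing theorem for non-trivial line bundles in $\Pic^0(A)$ then gives $H^i(A, \cM_\chi) = 0$ for all $i$ and all $\chi \neq 0$.

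Next I would propagate this acyclicity from $A$ up to $G/T$. In characteristic $p > 0$ we have $U = 0$ and $G/T = A$, so the statement is immediate. In characteristic zero $U$ is a vector group and $\alpha_U$ is an affine $U$-torsor, and $(\alpha_U)_* \cO_{G/T}$ carries an exhaustive increasing filtration by $\cO_A$-submodules whose graded quotients are finite direct sums of copies of $\cO_A$ (the filtration by degree along the fibers, with associated graded a symmetric algebra on a trivial bundle). The projection formula gives
\[ H^i(G/T, \alpha_U^* \cM_\chi) \cong H^i(A, \cM_\chi \otimes (\alpha_U)_* \cO_{G/T}), \]
and this filtration combined with long exact sequences and the commutation of cohomology with filtered colimits on the noetherian scheme $A$ reduces the right-hand side to vanishing when $\chi \neq 0$.

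Only the $\chi = 0$ piece $\cL_0 = \cO_{G/T}$ then survives, and its contribution to $H^*(G)$ is implemented by the inclusion $\cO_{G/T} \hookrightarrow (\pi_T)_* \cO_G$, which is precisely the pullback $\pi_T^\#$. Hence $\pi_T^*: H^*(G/T) \to H^*(G)$ is an isomorphism of graded algebras; since $G$ is commutative, $\pi_T$ is a morphism of group schemes, so $\pi_T^*$ is automatically a morphism of graded Hopf algebras. I expect the main technical obstacle to be step three: verifying carefully that the filtration on $(\alpha_U)_* \cO_{G/T}$ has graded pieces which are globally (not merely locally) trivial on $A$, and handling the passage to the filtered colimit in cohomology.
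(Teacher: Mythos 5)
Your argument is correct and is essentially the paper's own proof: it exploits the fibered-product structure $G \cong G/U \times_A G/T$ to decompose the pushforward of $\cO_G$ into $T$-eigensheaves that are algebraically trivial line bundles (non-trivial for $\chi \neq 0$ precisely because $G/U$ is anti-affine), kills those by Mumford's vanishing, and treats the $U$-direction via the degree filtration of $(\alpha_U)_*\cO_{G/T}$ with graded pieces $\cO_A$ plus passage to the colimit — the paper does the same computation, only pushing directly down to $A$ by writing $\alpha_*\cO_G \cong (\alpha_T)_*\cO_{G/U} \otimes_{\cO_A} (\alpha_U)_*\cO_{G/T}$, which matches your projection-formula step. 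The one point you leave implicit is that the eigensheaf decomposition over the character group requires $T$ to be split; the paper disposes of this in one line by first extending the base field, which is harmless since the pullback being an isomorphism can be checked after a faithfully flat field extension.
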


\begin{proof}
We may replace $k$ with any field extension, and hence assume that
the torus $T$ is split.

Since the morphism $\alpha: G \to A$ (the quotient by $T \times U$) 
is affine, we have
\begin{equation}\label{eqn:aff}
H^*(G) = H^*(G,\cO_G) = H^*(A,\alpha_*(\cO_G)). 
\end{equation}
Moreover, (\ref{eqn:str}) yields an isomorphism
\[ \alpha_*(\cO_G) \cong 
(\alpha_T)_*(\cO_{G/U}) \otimes_{\cO_A} (\alpha_U)_*(\cO_{G/T}). \]
So it suffices to show that the map 
\[ H^*(A, (\alpha_U)_*(\cO_{G/T})) \longrightarrow
H^*(A, (\alpha_T)_*(\cO_{G/U}) \otimes_{\cO_A} (\alpha_U)_*(\cO_{G/T})) \]
induced by the pull-back $\cO_A \to (\alpha_T)_*(\cO_{G/U})$, 
is an isomorphism.

By~\cite[\S 2.1]{Br09}, there is an isomorphism of sheaves of 
$\cO_A$-modules
\begin{equation}\label{eqn:dec}
(\alpha_T)_*(\cO_{G/U}) \cong \bigoplus_{\lambda \in \wT} \cL_{\lambda},
\end{equation} 
where $\wT$ denotes the character group of $T$, and each 
$\cL_{\lambda}$ is an algebraically trivial invertible sheaf on $A$.
Moreover, $\cL_0 \cong \cO_A$ but $\cL_{\lambda}$ is non-trivial for
any $\lambda \neq 0$. In view of~\cite[\S 8, p. 76]{Mumford}, 
it follows that 
\begin{equation}\label{eqn:van}
H^*(A,\cL_{\lambda}) = 0 \quad (\lambda \neq 0). 
\end{equation}

On the other hand, since $\alpha_U$ is a torsor under the 
vector group $U$, the sheaf $(\alpha_U)_*(\cO_{G/T})$ has an increasing 
filtration by coherent subsheaves indexed by the non-negative 
integers, with subquotients being the structure sheaf $\cO_A$.
(Indeed, $(\alpha_U)_*(\cO_{G/T}) = \cL_{(G/T)/U}(\cO(U))$, and
the $U$-module $\cO(U)$ has an increasing filtration with 
subquotients being the trivial module $k$). By (\ref{eqn:van}), 
this yields
\[ H^*(A,\cL_{\lambda} \otimes_{\cO_A}(\alpha_U)_*(\cO_{G/T})) = 0 
\quad (\lambda \neq 0). \] 
Together with (\ref{eqn:dec}), this completes the proof.
\end{proof}

\begin{corollary}\label{cor:pos}
If $\charc(k) >0$, then the pull-back $\alpha^*: H^*(A) \to H^*(G)$
is an isomorphism of graded Hopf algebras.
\end{corollary}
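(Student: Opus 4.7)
The plan is to observe that this corollary is essentially an immediate consequence of Proposition~\ref{prop:iso} combined with the structural fact, already recalled at the beginning of this section, that the unipotent part $U$ of an anti-affine group $G$ is trivial in positive characteristic. So almost no new work is required.

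More precisely, I would argue as follows. First, since $\charc(k) > 0$, we have $U = 0$, so the extension $0 \to U \to G/T \to A \to 0$ collapses to an isomorphism $\alpha_U : G/T \stackrel{\cong}{\longrightarrow} A$. Under this identification, the composition of the quotient morphism $G \to G/T$ with $\alpha_U$ is exactly the structural morphism $\alpha : G \to A$. Next, Proposition~\ref{prop:iso} says that the pull-back $H^*(G/T) \to H^*(G)$ along $G \to G/T$ is an isomorphism of graded Hopf algebras. Composing with $\alpha_U^* : H^*(A) \to H^*(G/T)$, which is an isomorphism of graded Hopf algebras since $\alpha_U$ is an isomorphism of group schemes, we conclude that $\alpha^* : H^*(A) \to H^*(G)$ is an isomorphism of graded Hopf algebras.

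There is really no hard step here: the whole content has been packaged into Proposition~\ref{prop:iso} and into the classification of anti-affine groups (which forces $U = 0$ in positive characteristic). The only point that deserves a sentence is the verification that the Hopf algebra structure is preserved, which is automatic since every map in sight is the pull-back of a homomorphism of group schemes, and such pull-backs intertwine the comultiplications, counits, and antipodes by functoriality.
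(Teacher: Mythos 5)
Your proposal is correct and is exactly the intended deduction: the paper states the corollary without proof immediately after Proposition~\ref{prop:iso}, precisely because in positive characteristic $U=0$, so $G/T \cong A$ via $\alpha_U$ and the quotient $G \to G/T$ composed with $\alpha_U$ is $\alpha$. Your remark that Hopf algebra compatibility is automatic, since all maps are pull-backs of group scheme homomorphisms, is also the right (and only) point to note.
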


In view of these results, we may assume that $\charc(k) = 0$,
and $G$ is an extension of the abelian variety $A$ by the vector 
group $U$. Recall that there is a universal such extension,
$0 \to V \to E \to A \to 0$, where $V := H^1(A)^{\vee}$ (the dual
vector space of $H^1(A)$, viewed as an additive group). Moreover, 
by~\cite[\S 2.2]{Br09}, $E$ is anti-affine and we have a commuting 
diagram of extensions
\begin{equation}\label{eqn:ext}
\xymatrix{0 \ar[r] & V \ar[r]\ar[d]^{\gamma} & 
E \ar[r]^{\beta} \ar[d] & A \ar[r] \ar[d]^{\id} & 0\\
0 \ar[r] & U \ar[r] & G \ar[r]^{\alpha} & A \ar[r] & 0, \\}
\end{equation}
where the classifying map $\gamma$ is surjective. 

\begin{proposition}\label{prop:zero}
With the above notation and assumptions, the homomorphism 
$\alpha^*: H^*(A) \to H^*(G)$ is surjective, and its kernel is 
the ideal of $H^*(A) = \Lambda^*(H^1(A)) = \Lambda^*(V^{\vee})$ 
generated by the image of 
$\gamma^{\vee} : U^{\vee} \to V^{\vee}$ (the transpose of 
$\gamma : V \to U$). In particular, we have an isomorphism 
of graded Hopf algebras $H^*(G) \cong \Lambda^*(W^{\vee})$, where 
$W:= \Kern(\gamma)$.
\end{proposition}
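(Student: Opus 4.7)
The plan is to compute $H^*(G)$ via the Leray spectral sequence associated with the affine morphism $\alpha: G \to A$. Since $\alpha$ is affine (as a $U$-torsor), $H^n(G) = H^n(A,\alpha_*\cO_G)$, and the $U$-torsor structure endows $\alpha_*\cO_G$ with a canonical increasing filtration $\cO_A = \cF_0 \subset \cF_1 \subset \cdots \subset \alpha_*\cO_G$ whose graded pieces are $\cF_m/\cF_{m-1} \cong \Sym^m(U^\vee) \otimes \cO_A$. Combined with the Serre isomorphism $H^n(A,\cO_A) = \Lambda^n(V^\vee)$ (using $H^1(A) = V^\vee$), this produces a convergent multiplicative spectral sequence
\[ E_1 \;=\; \Sym(U^\vee) \otimes \Lambda^*(V^\vee) \;\Longrightarrow\; H^*(G), \]
where $d_1$ has bidegree $(-1,+1)$ in $(\Sym\text{-degree},\Lambda\text{-degree})$.

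The crucial step is to identify $d_1$ as the Koszul derivation induced by $\gamma^\vee: U^\vee \to V^\vee$. On the generators $U^\vee = \Sym^1(U^\vee)$, $d_1$ is the connecting homomorphism of the short exact sequence $0 \to \cO_A \to \cF_1 \to U^\vee \otimes \cO_A \to 0$; the class of this extension in $U \otimes H^1(A,\cO_A) = \Hom(U^\vee, V^\vee)$ coincides with the classifying map of $G \to A$ as a $U$-torsor. Since $G$ is obtained from the universal extension $E$ by pushout along $\gamma: V \to U$, this classifying map is precisely $\gamma^\vee$. Multiplicativity of the spectral sequence (and the derivation property of $d_1$ on the bigraded $E_1$-algebra) then forces
\[ d_1(f \otimes \omega) \;=\; \sum_i (\partial_{e_i} f) \otimes \gamma^\vee(e_i) \wedge \omega \]
on all of $E_1$, where $\{e_i\}$ is a basis of $U^\vee$. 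Making this identification rigorous, by tracking the extension class through the long exact sequences defining $d_1$, is the main technical obstacle.

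Using $\charc(k) = 0$, I choose a vector-space splitting $V^\vee = \gamma^\vee(U^\vee) \oplus W^\vee$, so that $W^\vee \cong V^\vee/\gamma^\vee(U^\vee) = (\Kern\gamma)^\vee$. The $E_1$ complex then factors as
\[ \bigl(\Sym(U^\vee) \otimes \Lambda^*(\gamma^\vee(U^\vee))\bigr) \otimes \Lambda^*(W^\vee), \]
with $d_1$ acting trivially on the second factor and as the algebraic de Rham differential of $\bA^{\dim U}$ on the first. The algebraic Poincar\'e lemma in characteristic zero makes the first factor acyclic in positive degree with $H^0 = k$, whence $E_2 = \Lambda^*(W^\vee)$, concentrated on the $\Sym^0$ edge. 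All higher differentials therefore vanish, the spectral sequence degenerates, and the edge map identifies the composite $\Lambda^*(V^\vee) = E_1^{0,\bullet} \twoheadrightarrow E_\infty^{0,\bullet} = H^*(G)$ with $\alpha^*$. Hence $\alpha^*$ is surjective with kernel the ideal generated by $\gamma^\vee(U^\vee)$, giving $H^*(G) \cong \Lambda^*(V^\vee)/(\gamma^\vee(U^\vee)) \cong \Lambda^*(W^\vee)$ as graded algebras. Since $\alpha^*$ is a morphism of graded Hopf algebras and $\gamma^\vee(U^\vee)$ consists of primitive elements in $\Lambda^*(V^\vee)$, the ideal it generates is a Hopf ideal, and the resulting isomorphism is one of graded Hopf algebras.
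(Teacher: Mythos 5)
Your proposal is correct, but it takes a genuinely different route from the paper's main proof. The paper proves Proposition~\ref{prop:zero} by induction on $\dim(U)$: it picks $u\in U$, forms $G'=G/\bG_a$, and uses the key observation that the associated derivation $D$ acts by zero on $H^*(G)$ because the anti-affine group $G$ acts trivially on its own cohomology; this yields the short exact sequences (\ref{eqn:rec}), hence surjectivity of $\Lambda^*(H^1(G))\to H^*(G)$ and the Poincar\'e-polynomial count, and then it identifies $H^1(G)$ with $W^{\vee}$ via the long exact sequence for $\Ext^1(-,\bG_a)$ together with Serre's isomorphism $\Ext^1(A,\bG_a)\cong H^1(A,\cO_A)$. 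You instead run the multiplicative spectral sequence of the degree filtration of $\alpha_*(\cO_G)$ (graded pieces $\operatorname{Sym}^m(U^{\vee})\otimes\cO_A$), identify $d_1$ as the Koszul differential attached to $\gamma^{\vee}$, and kill everything off the $\operatorname{Sym}^0$-edge by the algebraic Poincar\'e lemma in characteristic zero; this is close in spirit to the paper's alternative proof (Remark~\ref{rem:alt}) and to a spectral-sequence argument of Teleman for the acyclicity of the universal extension $E$ (where $\gamma=\id$ and the whole $E_1$-page is the de~Rham complex of $V$), but you apply it directly to $G$, so surjectivity of $\alpha^*$ and the description of its kernel as the ideal generated by $\gamma^{\vee}(U^{\vee})$ drop out of one computation, with no induction and no $\Ext$-sequence. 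The trade-offs: the two points you leave somewhat implicit are exactly what the paper's inductive proof avoids, namely the identification of $d_1$ on $\operatorname{Sym}^1(U^{\vee})$ with the extension class of $\cF_1$, equal to the torsor class $\gamma^{\vee}$ under $\Ext^1(A,U)\hookrightarrow H^1(A,\cO_A\otimes U)$ (standard, and provable exactly as you indicate), and convergence of the spectral sequence for the exhaustive increasing filtration (harmless since coherent cohomology on the noetherian scheme $A$ commutes with filtered direct limits, but worth a sentence); also note that anti-affineness of $G$ enters your argument only through the surjectivity of $\gamma$, i.e.\ injectivity of $\gamma^{\vee}$, which your de~Rham factorization $\operatorname{Sym}(U^{\vee})\otimes\Lambda^*(\gamma^{\vee}(U^{\vee}))$ genuinely uses, whereas the paper's proof makes the role of anti-affineness transparent through the vanishing of the $D$-action. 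Finally, your closing Hopf-algebra remark (the kernel is a Hopf ideal generated by primitives, and $\alpha^*$ is a map of Hopf algebras) is a clean way to get the last assertion and matches what the paper obtains implicitly.
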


\begin{proof}
We argue by induction on $\dim(U)$. If $U = 0$, then $G = A$
and the statement is obvious. So we assume that $U \neq 0$, and
choose a non-zero $u \in U$. This yields an exact sequence
of vector groups 
\[ \CD
0 @>>> \bG_a @>>> U @>{\varphi}>> U' @>>> 0,
\endCD \]
where $1 \in \bG_a$ is sent to $u$. We also obtain a derivation 
$D$ of the $k$-algebra $\cO(U)$, given by 
$D(f) := \frac{d}{dt} f(x + t u)\vert_{t = 0}$.
Equivalently, $D$ is the vector field associated with $u$ viewed 
as a point of the Lie algebra of $U$. Then $D$ is surjective and
its kernel is $\cO(U)^{\bG_a} \cong \cO(U')$; in other words, we have
an exact sequence of $U$-modules
\begin{equation}\label{eqn:ind} 
\CD
0 @>>> \cO(U') @>{\varphi^*}>> \cO(U) @>{D}>> \cO(U) @>>> 0.
\endCD 
\end{equation}
Next, let $G' := G/\bG_a$ so that $G'$ sits in two extensions
\[ \CD 0 \to \bG_a @>>> G @>{\varphi}>> G' \to 0, \quad
0 \to U' @>>> G' @>{\alpha'}>> A \to 0. \endCD \]
By~(\ref{eqn:aff}), we have 
$H^*(G) = H^*(A,\alpha_*(\cO_G))$; likewise, 
$H^*(G') = H^*(A,\alpha'_*(\cO_{G'})$. Moreover, $\alpha_*(\cO_G)$
(resp. $\alpha'_*(\cO_{G'})$ is the $G$-sheaf on $A = G/U$ 
associated to the $U$-module $\cO(U)$ (resp. $\cO(U')$).
But the exact sequence (\ref{eqn:ind}) yields an exact sequence 
of $G$-sheaves
\[ \CD
0 @>>> \alpha'_*(\cO_{G'}) @>{\varphi^*}>> \alpha_*(\cO_G) 
@>{D}>> \alpha_*(\cO_G) @>>> 0,
\endCD \]
and hence a long exact sequence of cohomology groups
\[ \CD
\cdots H^{i-1}(G) @>{D}>> H^{i-1}(G) @>>> H^i(G')
@>{\varphi^*}>> H^i(G) @>{D}>> H^i(G) \cdots
\endCD \] 
Now $D$ acts on $H^*(G)$ via the action of the Lie algebra of
$G$ arising from the $G$-action by (left or right) multiplication.
But the anti-affine group $G$ acts trivially on its module
$H^*(G)$, and hence $D$ acts by $0$. This yields short exact sequences
\begin{equation}\label{eqn:rec} 
\CD 
0 @>>> H^{i-1}(G) @>>> H^i(G') @>{\varphi^*}>> H^i(G) @>>> 0.
\endCD 
\end{equation}
In particular, $\varphi^* : H^*(G') \to H^*(G)$ is surjective.
Using the induction assumption, it follows that the natural 
homomorphism $\psi: \Lambda^*(H^1(G)) \to H^*(G)$ is surjective. 
On the other hand, by (\ref{eqn:rec}), the Poincar\'e polynomial
$P_{H^*(G)}(t) := \sum_{i \geq 0} \dim(H^i(G)) \, t^i$ satisfies
$P_{H^*(G')}(t) = (1 + t) P_{H^*(G)}(t)$. By the induction assumption 
again, this yields $P_{H^*(G)}(t) = (1 + t)^n$, where 
$n = \dim(H^1(G')) - 1 = \dim(H^1(G))$. Thus, $H^*(G)$ and 
$\Lambda^*(H^1(G))$ have the same Poincar\'e polynomial; hence
$\psi$ is an isomorphism.

To complete the proof, it remains to construct an isomorphism
$W^{\vee} \stackrel{\cong}{\to} H^1(G)$, compatible with pull-backs
of anti-affine extensions of $A$ by vector groups.
We do this in two steps.

First, we construct an isomorphism 
$W^{\vee} \stackrel{\cong}{\longrightarrow} \Ext^1(G,\bG_a)$
compatible with such pull-backs. For this, consider the 
exact sequence of commutative group schemes
$ 0 \to U \to G \to A \to 0$
and the associated long exact sequence
\[
0 \to \Hom(A,\bG_a) \to \Hom(G,\bG_a) \to \Hom(U,\bG_a) \to \]
\[ \to \Ext^1(A,\bG_a) \to \Ext^1(G,\bG_a) \to \Ext^1(U,\bG_a). \]
We have $\Hom(A,\bG_a) = 0 = \Hom(G,\bG_a)$, since $G$ is 
anti-affine; also, $\Hom(U,\bG_a) = U^{\vee}$, 
$\Ext^1(A,\bG_a) = V^{\vee}$ and $\Ext^1(U,\bG_a) = 0$. Moreover,
the pushout map $\partial : \Hom(U,\bG_a) \to \Ext^1(A,\bG_a)$ 
is identified to the transpose of the classifying map
$\gamma : V \to U$; thus, 
$\Coker(\partial) \cong \Coker(\gamma^{\vee}) = W^{\vee}$. This 
yields the required isomorphism.

Next, we show that the natural map
\[ u : \Ext^1(G,\bG_a) \to H^1(G,\cO_G) = H^1(G) \]
that associates to each extension the class of the corresponding
$\bG_a$-torsor, is an isomorphism. For this, we argue again by 
induction on $\dim(U)$. If $U = 0$, then $G = A$ and the assertion
is exactly \cite[Chap.~7, Thm.~7]{Serre}. For an arbitrary $U$,
let $W'$, $G'$ be as above; then the exact sequence 
$0 \to \bG_a \to G \to G' \to 0$ yields a commutative diagram 
of exact sequences
\[ \CD
0 @>>> k @>>>  \Ext^1(G',\bG_a) @>{\varphi^*}>> 
\Ext^1(G,\bG_a)  @>>> 0 \\
& & @V{\id}VV @V{u'}VV   @V{u}VV \\
0 @>>> k @>>>  H^1(G') @>{\varphi^*}>>  H^1(G) @>>> 0, \\
\endCD \]
since $\Hom(G',\bG_a) = \Hom(G,\bG_a) = \Hom(\bG_a,\bG_a) = k$
and $\Ext^1(\bG_a,\bG_a) = 0$; the bottom exact sequence is 
(\ref{eqn:rec}) for $i = 1$. By the induction assumption,
$u'$ is an isomorphism; it follows that so is $u$.
\end{proof}

\begin{remark}\label{rem:alt}
We present an alternative proof of Proposition~\ref{prop:zero}
which is more conceptual but less self-contained. We first claim that
\begin{equation}\label{eqn:acy}
H^i(E) = 0 \quad (i > 0).
\end{equation}
This has been proved by G.~Laumon in an unpublished preprint 
(see~\cite[Thm.~2.4.1]{Laumon}); we provide another argument 
as follows.

Since the morphism $\beta : E \to A = E/V$ is affine, we have
$H^i(E) = H^i(A,\beta_*(\cO_E))$. Moreover, $\beta_*(\cO_E)$ 
is the $E$-sheaf on $A$ corresponding to the $V$-module 
$\cO(V)$. The latter may be characterized as the injective hull of 
the trivial module $k$ (the unique simple module).

Now recall that the category $\modf(V)$ of finite-dimensional 
$V$-modules is equivalent to $\Coh^E(A)$, via
$M \mapsto \cL_{E/V}(M)$. Moreover, each coherent $E$-sheaf $\cF$ 
on $A$ has a finite increasing filtration with subquotients being 
the structure sheaf $\cO_A$, i.e., $\cF$ is the sheaf of local
sections of a \emph{unipotent} vector bundle. In fact, this yields an 
equivalence from $\Coh^E(A)$ to the category $\Uni(A)$ of unipotent 
vector bundles on $A$ (see~\cite[Rem.~3.13(ii)]{Br12}).
Also, recall that $\Uni(A)$ is equivalent to the category 
$\Coh_0(\wA)$ of coherent sheaves on the dual abelian variety 
$\wA$ supported at the origin, via the Fourier-Mukai transform
that assigns to a coherent sheaf on $\wA$ supported at $0$, the
sheaf $(p_1)_*(\cP \otimes_{\cO_{\wA}} p_2^*(\cF))$ on $A$; here
$p_1,p_2$ denote the projections from $A \times \wA$, 
and $\cP$ stands for the Poincar\'e bundle on $A \times \wA$ 
(see~\cite[Thm. 4.12]{Mukai}).

Thus, we obtain an equivalence of abelian categories from $\modf(V)$ 
to $\Coh_0(\wA)$. By taking direct limits, this extends to an equivalence 
of abelian categories $F: \Mod(V) \to \QCoh_0(\wA)$. 
Thus, $F$ sends $\cO(V)$ to the injective hull $I$ of the residue 
field $k(0)$. By~\cite[Thm. 4.12]{Mukai} again, we have 
\[ H^i(A,\cL_{E/V}(M)) \cong \Ext^i_{\cO_{\wA,0}}(k(0),F(M)) \]
for any finite-dimensional $V$-module $M$ and any $i \geq 0$.
Since cohomology commutes with direct limits, it follows that
\[ H^i(A,\beta_*(\cO_E)) = H^i(A,\cL_{E/V}(\cO(V)))
\cong \Ext^i_{\cO_{\wA,0}}(k(0), F(\cO(V)))
= \Ext^i_{\cO_{\wA,0}}(k(0), I). \]
But the latter vanishes for any $i > 0$; this yields (\ref{eqn:acy}).

Next, recall that $H^*(G) = H^*(A,\alpha_*(\cO_G))$, 
where $\alpha_*(\cO_G)$ is the $E$-sheaf on $A = E/V$ 
corresponding to the $V$-module $\cO(V/W)$.
The Koszul complex yields a resolution of this $V$-module,
\[ 0 \to \cO(V) \otimes \Lambda^n(W^{\vee}) 
\to \cO(V) \otimes \Lambda^{n-1}(W^{\vee}) \to \cdots \to
\cO(V) \to \cO(V/W) \to 0, \]
where $n := \dim(W)$, and hence an exact sequence
\[ 0 \to \beta_*(\cO_E) \otimes \Lambda^n(W^{\vee}) 
\to \beta_*(\cO_E) \otimes \Lambda^{n-1}(W^{\vee}) \to \cdots \to
\beta_*(\cO_E) \to \alpha_*(\cO_G) \to 0. \]
Moreover, $H^i(A,\beta_*(\cO_E))$ vanishes for all $i > 0$, by
(\ref{eqn:acy}). 
So we obtain an isomorphism
\[  H^*(G) = H^*(G,\alpha_*(\cO_A)) \cong \Lambda^*(W^{\vee}). \]
Also, note that $W^{\vee} = V^{\vee}/\Ima(\gamma) = H^1(A)/\Ima(\gamma)$.
So, when $U$ is trivial, we recover the isomorphism 
$H^*(A) \cong \Lambda^* (H^1(A))$; for an arbitrary $U$, we 
obtain the required isomorphism.
\end{remark}

\section{The main results}
\label{sec:fin}

\subsection{Proof of Theorem~\ref{thm:main}}

Recall that $G$ denotes a group scheme of finite type over $k$,
and $j : G_{\ant} \to G$ the inclusion of the largest anti-affine 
subgroup. 

By Proposition~\ref{prop:mod}, there is an isomorphism of 
graded $\cO(G)$-modules 
$\psi : H^*(G) \to \cO(G) \otimes H^*(G_{\ant})$
which identifies $j^* : H^*(G) \to H^*(G_{\ant})$ with 
$e_G^* \otimes \id$. Moreover, by Corollary~\ref{cor:pos}
(when $\charc(k) > 0$) and Proposition~\ref{prop:zero} (when 
$\charc(k) = 0$), the natural map 
$\Lambda^* (H^1(G_{\ant})) \to H^*(G)$ is an isomorphism, and
the $k$-vector space $H^1(G_{\ant})$ is finite-dimensional. 
Thus, $H^*(G)$ is free of finite rank as a graded module over 
$\cO(G)$; recall that the latter algebra is finitely generated. 
Moreover, the natural homomorphism of graded $\cO(G)$-modules
\[ \varphi : \Lambda^*_{\cO(G)} (H^1(G)) \to H^*(G) \]
is an isomorphism at the $k$-point $e_G$. Since $\varphi$ is 
$G \times G$-equivariant, it is an isomorphism everywhere.

If $G$ is connected, then we have isomorphisms of graded Hopf algebras
\[ H^*(G) \cong \cO(G) \otimes H^*(G)^{G \times G} 
\cong \cO(G) \otimes H^*(G_{\ant}) \]
by Proposition~\ref{prop:hopf}. Since 
$P^i(G) \subset H^i(G)^{G \times G}$ for $i \geq 1$ 
(Proposition~\ref{prop:pri}), we obtain that 
$P^i(G) \subset P^i(G_{\ant})$ via pull-back. But in view of the 
structure of $H^*(G_{\ant})$, we have $P^1(G_{\ant}) = H^1(G_{\ant})$
and $P^i(G_{\ant}) = 0$ for $i \geq 2$. Thus, $P^i(G) = 0$ for 
$i \geq 2$ as well. Moreover, by Proposition~\ref{prop:pri} again,
$P^1(G) = H^1(G)^{G \times G}$ and hence we obtain an isomorphism 
\begin{equation}\label{eqn:prim}
j^* : P^1(G) \stackrel{\cong}{\longrightarrow} H^1(G_{\ant}). 
\end{equation}
This completes the proof of Theorem~\ref{thm:main}.

\medskip

Combining Proposition~\ref{prop:hopf}, Corollary~\ref{cor:pos} and 
Proposition~\ref{prop:zero}, we also obtain the characterization 
of `acyclic' group schemes mentioned in the introduction:

\begin{corollary}\label{cor:acy}
Let $G$ be a group scheme of finite type over $k$. Then
$H^*(G) = k$ if and only if $G$ is trivial when $\charc(k) > 0$,
resp. $G \cong S \times_A E$ when $\charc(k) = 0$, where $S$ is 
an anti-affine extension of an abelian variety $A$ by a torus, and
$E$ is the universal vector extension of $A$.
\end{corollary}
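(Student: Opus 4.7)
The plan is to first extract the easy consequence $\cO(G) = H^0(G) = k$ from the hypothesis $H^*(G) = k$, which forces $G$ to be anti-affine; in particular $G$ is smooth, connected, commutative, and coincides with $G_{\ant}$, so the structure theory of Section~\ref{sec:caag} applies directly to $G$. I then split into the two characteristic cases.

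If $\charc(k) > 0$, then the unipotent part of $G$ vanishes, and $G$ is a semi-abelian variety fitting in $0 \to T \to G \to A \to 0$. Corollary~\ref{cor:pos} identifies $H^*(G)$ with $H^*(A) = \Lambda^*(H^1(A))$, and the hypothesis forces $H^1(A) = 0$, hence $A = 0$. Then $G = T$ is a torus, and anti-affineness of a torus forces $G = 0$ (the only anti-affine affine group).

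If $\charc(k) = 0$, I apply Proposition~\ref{prop:iso} to replace $G$ by the quotient $G/T$ without changing $H^*$, reducing to the case where $G$ is an extension of $A$ by a vector group $U$. Proposition~\ref{prop:zero} then gives $H^*(G) \cong \Lambda^*(W^{\vee})$ with $W = \Kern(\gamma)$, where $\gamma : V \to U$ is the surjective classifying map of the universal vector extension. The hypothesis forces $W = 0$, so $\gamma$ is an isomorphism, hence $G/T \cong E$. Applying (\ref{eqn:str}) to the original $G$ (before quotienting by $T$) yields $G \cong (G/U) \times_A (G/T) \cong S \times_A E$, where $S := G/U$ is an anti-affine extension of $A$ by the torus $T$.

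For the converse, given $S$ and $E$ as in the statement, I must verify that $G := S \times_A E$ is anti-affine and that $H^*(G) = k$. Anti-affineness should follow by identifying the toric and unipotent parts of $G$ (namely $T$ coming from $S$ and $V$ coming from $E$) and noting that the quotients $G/V \cong S$ and $G/T \cong E$ are both anti-affine, which matches the classification criterion for anti-affine extensions of $A$ by $T \times V$ (cf.~\cite[\S 2]{Br09}). Granting this, Proposition~\ref{prop:iso} gives $H^*(G) \cong H^*(G/T) = H^*(E)$, and Proposition~\ref{prop:zero} applied with classifying map $\id_V$ gives $H^*(E) \cong \Lambda^*(0) = k$. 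The main potential obstacle is the verification of anti-affineness of $S \times_A E$; once this is in hand, the remainder is a direct application of the propositions of Section~\ref{sec:caag}.
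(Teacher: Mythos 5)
Your proposal is correct and follows essentially the same route as the paper, which proves the corollary precisely by this combination: $H^0(G)=\cO(G)=k$ forces $G=G_{\ant}$, Corollary~\ref{cor:pos} settles the case $\charc(k)>0$, and Proposition~\ref{prop:iso} together with Proposition~\ref{prop:zero} and the decomposition (\ref{eqn:str}) settles $\charc(k)=0$. The anti-affineness of $S\times_A E$ that you flag in the converse is exactly the input the paper takes from the structure theory of anti-affine groups in \cite{Br09}, so your citation closes that point.
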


\subsection{The primitive elements}

From now on, we assume that $G$ is connected; we will describe 
the space $P^1(G)$ of homogeneous primitive elements of degree $1$,
in terms of the structure of $G$. 

By~\cite[Lem. IX 2.7] {Raynaud}, $G$ has a normal connected linear
subgroup scheme $L$ such that the quotient $G/L$ is an abelian 
variety; we denote by
\[ \alpha : G \longrightarrow A := G/L \]
the quotient homomorphism. In characteristic $0$, one easily
sees that $L$ is the largest connected linear (or, equivalently,
affine) subgroup scheme of $G$. In particular, $L$ is unique; 
we then set $L := G_{\aff}$ and $A := A(G)$. This does not extend 
to positive characteristics, since we may replace $A$ with its
quotient by any infinitesimal subgroup scheme. Yet when $G$ is
smooth, there exists a (unique) smallest subgroup scheme $L$
as above (see~\cite[Thm. 9.2.1]{BLR}); we denote again $L$ by
$G_{\aff}$, and $G/L$ by $A(G)$. For example, if $G$ is 
anti-affine, then $G_{\aff} = T \times U$ with the notation 
of Section~\ref{sec:caag}. Returning to an arbitrary connected 
group scheme $G$, the largest abelian quotient $A$ is related to 
$A(G_{\ant})$ as follows:

\begin{proposition}\label{prop:ros}
With the above notation and assumptions, we have a commutative square
\begin{equation}\label{eqn:ros}
\xymatrix{
G_{\ant} \ar[r]^{j}\ar[d]_{\alpha_{\ant}} & G \ar[d]^{\alpha} \\
A(G_{\ant}) \ar[r]^{\varphi} & A, \\
} 
\end{equation}
where $\varphi$ is an isogeny.
\end{proposition}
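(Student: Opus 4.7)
My plan has three parts: (i) show that $\alpha \circ j$ annihilates $(G_{\ant})_{\aff}$, which will produce $\varphi$ and commutativity; (ii) show that $\varphi$ is surjective; (iii) show that $\varphi$ has finite kernel. All three parts will rely on a single elementary fact: a closed connected affine subgroup of an abelian variety is trivial (being affine and proper it is finite, and a connected finite group scheme over a field is trivial).

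For (i), I will use that $(G_{\ant})_{\aff} = T \times U$ is connected and affine (recalled in Section~\ref{sec:caag}), so its image under $\alpha \circ j$ is a connected affine subgroup of $A$, hence trivial. Thus $\alpha \circ j$ factors uniquely through $\alpha_{\ant} : G_{\ant} \to A(G_{\ant})$, producing $\varphi$ with $\varphi \circ \alpha_{\ant} = \alpha \circ j$.

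For (ii), set $A' := \alpha(G_{\ant}) \subset A$ (scheme-theoretic image) and $B := A/A'$, a quotient abelian variety. The composite $G \to A \to B$ kills $G_{\ant}$ and hence factors through $G/G_{\ant}$, which is affine by the affinization theorem. Therefore $B$ is the image of an affine group scheme but is also proper, hence finite; being connected, it is trivial. So $A' = A$ and $\varphi$ is surjective.

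For (iii), $\Kern(\varphi) = (G_{\ant} \cap L)/(G_{\ant})_{\aff}$, and I need its neutral component to vanish. The neutral component of $G_{\ant} \cap L$ is connected and affine (as a closed subgroup of $L$), so by the argument of (i) applied to $\alpha_{\ant}$, it lies in $(G_{\ant})_{\aff}$; conversely $(G_{\ant})_{\aff} \subset L$ for the same reason applied to $\alpha$. Thus $\Kern(\varphi) \cong \pi_0(G_{\ant} \cap L)$ is finite. The main bookkeeping obstacle is not a single hard step but rather tracking which intersections are connected and affine, especially given the possible non-canonicity of $L$ in positive characteristic when $G$ is not smooth; the uniform triviality principle for connected affine subgroups of abelian varieties is what makes all three parts run through cleanly.
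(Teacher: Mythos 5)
Your steps (i) and (ii) are essentially the paper's own argument (the paper constructs $\varphi$ by noting that $\alpha\circ j$ kills the smooth connected affine group $T\times U=\Kern(\alpha_{\ant})$, and gets surjectivity because the quotient of $A$ by the image of $\alpha\circ j$ is connected, proper and affine, hence trivial). But step (iii) has a genuine gap, and the ``single elementary fact'' you rest everything on is false in positive characteristic: a connected finite group scheme over a field need not be trivial ($\bmu_p$, $\alpha_p$, Frobenius kernels of abelian varieties are connected and nontrivial). The correct principle requires smoothness. In (i) and (ii) this is harmless, because there you apply it to a quotient of the smooth group $T\times U$, resp.\ of the abelian variety $A$, and quotients of smooth group schemes over a field are smooth. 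In (iii), however, you apply it to the image of $(G_{\ant}\cap L)^{o}$ under $\alpha_{\ant}$, and $G_{\ant}\cap L$ need not be smooth; that image can be a nontrivial infinitesimal subgroup of $A(G_{\ant})$. Consequently your intermediate claims --- that $(G_{\ant}\cap L)^{o}\subset(G_{\ant})_{\aff}$ and that $\Kern(\varphi)\cong\pi_0(G_{\ant}\cap L)$ --- are false in general, even for the canonical choice of $L$: in the paper's Example~\ref{exam:fin} one has $L=G_{\aff}\cong\bG_a\times\bG_m$, $G_{\ant}\cong E$ with $(G_{\ant})_{\aff}$ trivial, and $G_{\ant}\cap L\cong E_p\cong\bZ/p\bZ\times\bmu_p$, so $(G_{\ant}\cap L)^{o}\cong\bmu_p\neq 0$ and $\Kern(\varphi)\cong E_p$ is not \'etale, hence not isomorphic to $\pi_0(G_{\ant}\cap L)\cong\bZ/p\bZ$. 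The non-canonical $L$ you flag gives further examples, e.g.\ $L=T\cdot\Kern(F_G)$ for an anti-affine semi-abelian $G$.

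The good news is that the proposition only asserts finiteness of $\Kern(\varphi)$, not triviality of any neutral component, and finiteness follows by the paper's simpler argument with no connectedness considerations at all: $\Kern(\varphi)\cong(L\cap G_{\ant})/(G_{\ant})_{\aff}$ is affine, being a quotient of the group scheme $L\cap G_{\ant}$, which is closed in the affine $L$; and it is a closed subgroup scheme of the abelian variety $A(G_{\ant})$, hence proper. Proper and affine gives finite, so $\varphi$ is an isogeny. If you replace your step (iii) by this observation, and in (i)--(ii) invoke the triviality principle only for smooth connected affine subgroups of abelian varieties (which is all you use there), your proof is correct and coincides with the paper's.
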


\begin{proof}
The homomorphism $\alpha \circ j : G_{\ant} \to A$ sends
the smooth connected affine group scheme 
$T \times U = \Kern(\alpha_{\ant})$ to the origin of the abelian variety
$A$, and hence factors through $\alpha_{\ant}$. This shows the existence 
of $\varphi$. The quotient of $A$ by the
image of $\alpha \circ j$ is connected and proper, but also
affine as a quotient group scheme of $G/G_{\ant}$. Thus, 
$\alpha \circ j$ is surjective, and hence so is $\varphi$.
Finally, since $\Kern(\alpha \circ j) = L \cap G_{\ant}$, we obtain
an isomorphism 
$\Kern(\varphi) \cong (L \cap G_{\ant})/\Kern(\alpha_{\ant}) 
= (L \cap G_{\ant})/(G_{\ant})_{\aff}$.
The latter quotient is a linear subgroup scheme of the abelian
variety $G_{\ant}/(G_{\ant})_{\aff} = A(G_{\ant})$,
and hence is finite. Thus, $\varphi$ is an isogeny.
\end{proof}

\begin{theorem}\label{thm:pri}
Keep the above notation. If $\charc(k) > 0$, then 
$\dim(P^1(G)) = \dim(A)$. If $\charc(k) = 0$, then 
$P^1(G)$ is the image of $\alpha^* : H^1(A(G)) \to H^1(G)$.
\end{theorem}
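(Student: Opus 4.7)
The plan rests on the isomorphism $j^*\colon P^1(G) \xrightarrow{\cong} H^1(G_{\ant})$ from (\ref{eqn:prim}) together with the commutative square (\ref{eqn:ros}) of Proposition~\ref{prop:ros}, which yields on $H^1$ the identity
\[ j^* \circ \alpha^* \;=\; \alpha_{\ant}^* \circ \varphi^*. \]
Regardless of characteristic, $\Ima(\alpha^*) \subseteq P^1(G)$: indeed, $\alpha$ is a morphism of group schemes, so $\alpha^*$ is a morphism of graded Hopf algebras; and $H^1(A) = P^1(A)$, since $H^*(A) = \Lambda^*(H^1(A))$ with $H^1(A)$ the primitive part (Serre's theorem).

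For $\charc(k) > 0$, a dimension count finishes the job. By Corollary~\ref{cor:pos}, $\alpha_{\ant}^*\colon H^*(A(G_{\ant})) \to H^*(G_{\ant})$ is an isomorphism, so $\dim H^1(G_{\ant}) = \dim H^1(A(G_{\ant})) = \dim A(G_{\ant})$; and $\dim A(G_{\ant}) = \dim A$ since $\varphi$ is an isogeny. Combined with (\ref{eqn:prim}), this gives $\dim P^1(G) = \dim A$.

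For $\charc(k) = 0$, I would show that $\alpha_{\ant}^* \circ \varphi^*\colon H^1(A) \to H^1(G_{\ant})$ is surjective; since $j^*$ restricts to an isomorphism on $P^1(G)$ and $\Ima(\alpha^*) \subseteq P^1(G)$, the identity above then forces $\Ima(\alpha^*) = P^1(G)$. For the surjectivity of $\alpha_{\ant}^*$ at the $H^1$ level: Proposition~\ref{prop:iso} identifies $H^*(G_{\ant})$ with $H^*(G_{\ant}/T)$ via pull-back, and Proposition~\ref{prop:zero} applied to the extension $G_{\ant}/T \to A(G_{\ant})$ by the vector group $U$ shows that $H^*(A(G_{\ant})) \to H^*(G_{\ant}/T)$ is surjective. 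For $\varphi^*$, I would identify $H^1$ of an abelian variety with the Lie algebra of its dual; then $\varphi^*$ corresponds to $\Lie(\widehat{\varphi})$, and in characteristic zero the dual isogeny $\widehat{\varphi}$ has étale kernel and therefore induces an isomorphism on Lie algebras.

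The only nonformal input is this last point, namely that $\varphi^*$ is an isomorphism in characteristic zero, which rests on the characteristic-zero fact that all isogenies of abelian varieties are separable. Everything else is a routine combination of (\ref{eqn:prim}), the diagram (\ref{eqn:ros}), and the anti-affine reduction already carried out in Sections~\ref{sec:rag} and~\ref{sec:caag}.
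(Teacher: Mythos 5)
Your proof is correct and takes essentially the same route as the paper's: the isomorphism (\ref{eqn:prim}), the square (\ref{eqn:ros}), Corollary~\ref{cor:pos} plus a dimension count in positive characteristic, and in characteristic $0$ the surjectivity of $\alpha_{\ant}^*$ (via Propositions~\ref{prop:iso} and~\ref{prop:zero}) combined with $\varphi^*$ being an isomorphism. The only variation is your justification of that last point --- identifying $H^1$ with the Lie algebra of the dual abelian variety and using that isogenies are \'etale in characteristic $0$ --- where the paper instead chooses $\psi$ with $\varphi \circ \psi = n_{A(G)}$ and uses that $n^*$ is multiplication by $n$ on $H^1$; both arguments are valid.
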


\begin{proof}
If $\charc(k) >0$, then  
$\dim(P^1(G)) = \dim (H^1(G_{\ant})) = \dim (H^1(A(G_{\ant})))$
by (\ref{eqn:prim}) and Corollary~\ref{cor:pos}. Thus, 
$\dim (P^1(G)) = \dim (A(G_{\ant}))$.
But $\dim(A(G_{\ant})) = \dim (A)$ by Proposition~\ref{prop:ros}.
This proves the first assertion.

For the second assertion, note that $\Ima(\alpha^*)$ is contained 
in $P^1(G)$. Also, the commutative square (\ref{eqn:ros}) yields 
a commutative square of pull-backs
\begin{equation}\label{eqn:H1}
\xymatrix{
H^1(A(G)) \ar[r]^{\varphi^*}\ar[d]_{\alpha^*} & 
H^1(A(G_{\ant})) \ar[d]^{\alpha^*_{\ant}} \\
P^1(G) \ar[r]^{j^*} & H^1(G_{\ant}), \\
}
\end{equation}
where $j^*$ is the isomorphism (\ref{eqn:prim}); moreover, 
$\alpha^*_{\ant}$ is surjective in view of Proposition~\ref{prop:zero}.
Thus, it suffices to show that $\varphi^*$ is an isomorphism. 
But since $\varphi$ is an isogeny, there exists an isogeny 
$\psi : A(G) \to A(G_{\ant})$ such that $\varphi \circ \psi$ 
is the multiplication $n_{A(G)}$ for some positive integer $n$, 
and $\psi \circ \varphi = n_{A(G_{\ant})}$. Moreover, 
$n_{A(G)}^* : H^1(A(G)) \to H^1(A(G))$ is just multiplication by $n$,
and similarly for $n^*_{A(G_{\ant})}$ (as follows e.g. from the
isomorphism $H^1(A,\cO_A) \cong \Ext^1(A,\bG_a)$
for any abelian variety $A$, and from the bilinearity of $\Ext^1$). 
It follows that $\varphi$ is indeed an isomorphism.
\end{proof}

When $\charc(k) > 0$, it may happen that 
$\alpha^* : H^1(A(G)) \to H^1(G)$ is zero while $P^1(G) \neq 0$, 
as shown by the following:

\begin{example}\label{exam:fin}
Let $p := \charc(k)$ and let $E$ be an elliptic curve 
such that the $p$-torsion subgroup scheme $E_p$ 
(the kernel of the multiplication $p_E$) is isomorphic to 
$\bZ/p \bZ \times \bmu_p$, where $\mu_p$ denotes the multiplicative
group of $p$-th roots of unity; this holds if $E$ is ordinary and 
the base field $k$ is sufficiently large. 
We may view $E_p$ as a subgroup scheme of $\bG_a \times \bG_m$; then
\[ G := E \times^{E_p} (\bG_a \times \bG_m) \]
is a smooth connected commutative group scheme. We have
\[ G_{\aff} = E_p \times^{E_p}(\bG_a \times \bG_m) 
\cong \bG_a \times \bG_m \]
and $A(G) = E/E_p \cong E$; moreover, 
$G_{\ant} = E \times^{E_p} E_p \cong E$. Hence $G_{\ant} = A(G_{\ant})$, 
and the isogeny $\varphi : A(G_{\ant}) \to A(G)$ is identified to
the quotient morphism $E \to E/E_p$, that is, to $p_E : E \to E$.
Thus, $\varphi^* : H^1(E) \to H^1(E)$ is zero. In view of the 
commutative diagram (\ref{eqn:H1}), where $j^*$ is an isomorphism and 
$\alpha_{\ant}^* = \id$, it follows that $\alpha^* = 0$. But
$P^1(G) \cong H^1(G_{\ant}) \cong k$.
\end{example}

\end{document}